\newtheorem{theorem}{Theorem}[section]
\newtheorem{lemma}[theorem]{Lemma}
\newtheorem{cor}[theorem]{Corollary}
\newtheorem{conjec}[theorem]{Conjecture}
\theoremstyle{definition}
\theoremstyle{remark}
\newtheorem{remark}[theorem]{Remark}
\numberwithin{equation}{section}
\newcommand{\C}{\mathbb{C}}
\newcommand{\R}{\mathbb{R}}
\DeclareMathOperator{\Ric}{Ric}
\DeclareMathOperator{\inj}{inj}
\DeclareMathOperator{\scal}{scal}
\DeclareMathOperator{\vol}{Vol}
\DeclareMathOperator{\dvol}{dvol}
\DeclareMathOperator{\conj}{conj}
\DeclareMathOperator{\diam}{diam}
\title[Differentiable Stability and positive scalar curvature]{Differentiable stability and sphere theorems\\ for manifolds and Einstein manifolds\\ with positive scalar curvature}
\author{Wilderich Tuschmann}
\address{Institut f\"ur Algebra und Geometrie\\ Karlsruher Institut f\"ur Technologie\\ Kaiserstrasse 89--93\\ D-76133 Karlsruhe\\ Germany}
\email{wilderich.tuschmann@kit.edu}
\thanks{}
\author{Michael Wiemeler}
\address{Institut f\"ur Mathematik\\ Universit\"at Augsburg\\ D-86135 Augsburg\\ Germany}
\email{michael.wiemeler@math.uni-augsburg.de}
\thanks{}
\subjclass[2010]{}
\keywords{Positive scalar curvature, sphere theorems, stability, rigidity, Einstein manifolds}
\begin{document}
\begin{abstract} %More than fifty years ago 
Leon Green obtained remarkable rigidity results
for manifolds of positive scalar curvature with large conjugate radius and/or injectivity radius.
Using $C^{k,\alpha}$ convergence techniques, we prove several differentiable stability and sphere theorem versions of these results 
and apply those also to the study
of Einstein manifolds.
\end{abstract}

\maketitle

%-----------------------------------------------------------------------
% End of amsart.template
%-----------------------------------------------------------------------

\section{Introduction}
\label{sec:intro}

\

\noindent In 1963, Leon Green proved the following remarkable result:

\begin{theorem}[{Green \cite{MR0155271}}]
\label{sec:introduction-5}
  Let \(M\) be an \(n\)-dimensional closed Riemannian manifold whose scalar curvature and volume
	satisfy the inequality 
  \begin{equation*}
    \int_M\scal_M\dvol_M \geq n(n-1)\vol M.
  \end{equation*}
  Then the conjugate radius of \(M\) is bounded from above by \(\conj M\leq \pi\), and equality holds here 
	if and only if \(M\) is isometric to a spherical space form of constant sectional curvature one.
\end{theorem}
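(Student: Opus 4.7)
The plan is to apply the Morse index form along every geodesic of length $\pi$, then to average over the unit tangent bundle $UM$ using Liouville's theorem, thereby converting a pointwise conjugate-point statement into an integral scalar curvature identity.

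\textbf{Index inequality.} Supposing for contradiction that $\conj M > \pi$, for every $v\in UM$ the geodesic $\gamma_v\colon[0,\pi]\to M$ has no conjugate point in $(0,\pi]$. I would choose a parallel orthonormal frame $E_1,\dots,E_{n-1}$ along $\gamma_v$ perpendicular to $\gamma_v'$ and test the index form against $V_i(t)=\sin(t)\,E_i(t)$. Summing the strict inequalities $I(V_i,V_i)>0$ over $i=1,\dots,n-1$ and using $V_i'=\cos(t)E_i$ yields
\[
\int_0^\pi \bigl[(n-1)\cos^2 t - \sin^2 t\,\Ric(\gamma_v'(t),\gamma_v'(t))\bigr]\,dt > 0.
\]

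\textbf{Averaging via Liouville.} Integrating this over $v\in UM$, the geodesic flow preserves the Liouville measure, so for every $t$
\[
\int_{UM}\Ric(\gamma_v'(t),\gamma_v'(t))\,dv \;=\; \int_{UM}\Ric(v,v)\,dv \;=\; \frac{\vol(S^{n-1})}{n}\int_M \scal_M\,d\vol_M,
\]
the second equality coming from the fibrewise formula $\int_{U_pM}\Ric(v,v)\,dv=\frac{\vol(S^{n-1})}{n}\scal_M(p)$. Using the identities $\int_0^\pi\cos^2 t\,dt = \int_0^\pi\sin^2 t\,dt = \pi/2$ and $\vol(UM)=\vol(S^{n-1})\vol(M)$, the integrated strict inequality collapses to $\int_M\scal_M\,d\vol_M < n(n-1)\vol(M)$, contradicting the hypothesis. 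Hence $\conj M\leq\pi$.

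\textbf{Equality case and main difficulty.} For the rigidity assertion I would take $\conj M=\pi$: the same test fields now give the non-strict estimate $I(V_i,V_i)\geq 0$, so the averaging yields $\int_M\scal_M\,d\vol_M\leq n(n-1)\vol(M)$, which combined with the hypothesis is an equality. Therefore equality must hold pointwise in the index inequality for almost every, and by continuity of the curvature tensor for every, $v$; this forces each $\sin(t)E_i(t)$ to be a Jacobi field along $\gamma_v$. The Jacobi equation then reduces to $R(E,\gamma_v')\gamma_v' = E$ for every parallel $E\perp\gamma_v'$, showing that every sectional curvature equals $1$, and the Killing--Hopf theorem identifies $M$ with a spherical space form of constant curvature one; the converse is immediate. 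The crux of the argument is the averaging step, where Liouville invariance together with the fibrewise scalar-curvature identity bridges the purely local second-variation computation and the global integral hypothesis; the only subtlety in the rigidity step, namely the passage from almost-everywhere to everywhere equality, is handled by smoothness of $R$.
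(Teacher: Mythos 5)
Your proof is correct and follows essentially the same route as the paper's: the index form tested on $\sin(t)$ times parallel orthonormal fields, averaging over the unit tangent bundle via the flow-invariant Liouville measure and the fibrewise identity $\int_{S_pM}\Ric(u,u)\,d\sigma=\frac{\vol(S^{n-1})}{n}\scal_M(p)$, and rigidity via the null fields of the semidefinite index form being Jacobi fields, forcing constant curvature one. The only cosmetic difference is that the paper packages the computation as a general inequality $\vol M\geq \frac{a^2}{n(n-1)\pi^2}\int_M\scal_M\,\dvol_M$ for $\conj M\geq a$ and deduces the statement as a corollary, while you specialize to $a=\pi$ and argue by contradiction.
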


\

Green's theorem implies in particular the following rigidity result:

\

\begin{cor}
\label{sec:introduction-6}
A closed Riemannian $n$-manifold with scalar curvature $\scal_M \ge n(n-1)$ 
and injectivity radius equal to $\pi$ is isometric to the $n$-dimensional unit sphere $S^n(1)$.
\end{cor}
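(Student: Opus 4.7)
The plan is to deduce the corollary directly from Green's theorem (Theorem~\ref{sec:introduction-5}), using only the elementary inequality between injectivity and conjugate radius together with the size of $\inj$ on a nontrivial spherical space form.

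First I would pass from the pointwise bound to the integral hypothesis of Green's theorem: since $\scal_M \geq n(n-1)$ at every point, integration over $M$ yields
\begin{equation*}
\int_M \scal_M \, \dvol_M \;\geq\; n(n-1)\vol M,
\end{equation*}
so Theorem~\ref{sec:introduction-5} applies and gives $\conj M \leq \pi$.

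Next I would use the general inequality $\inj M \leq \conj M$. This follows pointwise: along any unit-speed geodesic $\gamma$ issuing from $p\in M$, no point of $\gamma$ past the first conjugate point is a distance minimizer to $p$, hence the first cut point appears no later than the first conjugate point, and taking the infimum over directions and base points gives the global inequality. Combined with the hypothesis $\inj M = \pi$ and Green's bound $\conj M \leq \pi$, I obtain $\conj M = \pi$. The equality case of Green's theorem then forces $M$ to be isometric to a spherical space form $S^n(1)/\Gamma$ of constant sectional curvature one.

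Finally, I would rule out nontrivial $\Gamma$. For any $\Gamma\neq\{1\}$ acting freely by isometries on $S^n(1)$, the injectivity radius at $[x]$ equals $\tfrac12\min_{\gamma\neq 1}d(x,\gamma x)$, which is bounded above by $\tfrac12\diam S^n(1)=\pi/2 < \pi$. Since our $M$ has $\inj M = \pi$, we conclude $\Gamma = \{1\}$ and $M$ is isometric to $S^n(1)$, as claimed. The whole argument is essentially a combinatorial combination of known facts once Green's theorem is in hand; I do not anticipate any genuine obstacle, the only point requiring care being the justification of $\inj\leq\conj$ and the elementary computation of $\inj$ for spherical space forms.
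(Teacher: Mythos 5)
Your proof is correct and follows essentially the same route as the paper, which states Corollary~\ref{sec:introduction-6} as an immediate consequence of Green's theorem; you simply fill in the standard details (integrating the pointwise bound, $\inj M\leq\conj M$, the equality case, and the fact that a nontrivial spherical space form has injectivity radius at most $\pi/2$).
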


\

Thus, when replacing diameter by injectivity radius, 
this latter rigidity result can be viewed as a scalar curvature analogue to %, and, in fact, predecessor %??
% of, 
the well-known maximal diameter sphere theorems for sectional curvature $\ge 1$ and Ricci curvature $\ge n-1$
obtained by Toponogov \cite{MR0103510}  in 1959 and Cheng \cite{MR0378001} in 1975.

\

In view of Grove-Shiohama's celebrated diameter sphere theorem for positive sectional curvature (see \cite{MR0500705}) and the
wealth of other sphere theorems for manifolds of positive sectional and of positive Ricci curvature (see, e.g., \cite{MR0140054}, \cite{MR0139120}, \cite{MR2449060}, \cite{MR3096308},
%bis hier Schnittkruemmung
 \cite{MR1074481}, \cite{MR1240887}, \cite{MR1369415},
\cite{MR1100210}, \cite{MR1049696}, \cite{MR697986},  \cite{MR1692012}, \cite{MR1326988}, 
 \cite{MR1209707}, \cite{MR682734}, \cite{MR1855649}, \cite{MR1001711}), it is therefore
natural to ask which conditions on the injectivity radius, or, more generally, conjugate radius, of a
closed Riemannian $n$-manifold $M$ with positive {\em scalar} curvature %$\ge n(n-1)$ 
will guarantee stability of
Green's above-mentioned results in the sense that $M$ can still be recognized as being
homeomorphic, or even diffeomorphic, to the standard $n$-sphere or, respectively, to an $n$-dimensional spherical space form.

\

Of course, stability results which actually imply diffeomorphism are of much more significance in this context than merely topological ones.
This is, in particular, also due to the fact that exotic spheres with positive scalar curvature are known to abound.
Indeed, any homotopy sphere of dimension \(n\not\equiv 1,2 \mod 8\), \(n\neq 4\), admits a metric of positive scalar curvature.
In the other dimensions, for \(n\geq 9\), all homotopy spheres which bound spin manifolds admit metrics of positive scalar curvature.
These constitute half of all of the homotopy spheres in these dimensions.
These results follow from a combination of results from \cite[Corollary 2.7]{MR0219077}, \cite[Proof of Theorem 2]{MR0180978} and \cite{MR1189863}.
(For more on curvature properties of exotic spheres 
% on these and other results concerning metrics of positive curvature on homotopy spheres 
see also the survey \cite{MR2434347}.)

\

The first main result of this note and its corollaries provide positive answers to whether Green's rigidity results
are differentiably stable as follows:

%\vfill\eject

\

\begin{theorem}
\label{sec:introduction}
 For all $n \in\mathbb{N}$, $C,\lambda_0,\lambda_1,i_0>0$ and $0\leq \beta < 1$ there exists 
$\epsilon=\epsilon(n,C,\lambda_0,\lambda_1,i_0,\beta)>0$ such that every closed \(n\)-dimensional  Riemannian manifold $M$ with
  \begin{align*}
    \int_M\scal_M\dvol_M&\geq n(n-1)\vol M& \conj M& \geq \pi-\epsilon\\
    \Ric_M&\geq -\lambda_0& \| \nabla\Ric_M\|&\leq \lambda_1\\
    \vol M &\leq \frac{C}{(\pi- \conj M)^\beta}& \inj M&\geq i_0
  \end{align*}
is diffeomorphic to a spherical space form.
\end{theorem}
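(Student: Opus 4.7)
The plan is to argue by contradiction via a Cheeger--Gromov--Anderson compactness argument combined with Green's Theorem~\ref{sec:introduction-5}. Suppose the conclusion fails; then for fixed $n,C,\lambda_0,\lambda_1,i_0,\beta$ there is a sequence of closed Riemannian $n$-manifolds $(M_i,g_i)$ satisfying all six hypotheses with some $\epsilon_i \to 0$, yet with no $M_i$ diffeomorphic to a spherical space form. The goal will be to extract, along a subsequence, a $C^{2,\alpha}$-limit which is already a spherical space form, and then invoke the $C^{2,\alpha}$ diffeomorphism stability of closed smooth limits to obtain a contradiction.

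A first observation is that the conjugate-radius hypothesis $\conj M_i \geq \pi - \epsilon_i$ combined with Rauch's comparison theorem yields an upper bound on the sectional curvatures of $M_i$ of the form $\bigl(\pi/(\pi-\epsilon_i)\bigr)^2 = 1 + o(1)$. Together with $\Ric_{M_i}\geq -\lambda_0$ this pins each sectional curvature between two fixed constants, and so gives uniform two-sided control on the full Ricci tensor. Adding $\|\nabla\Ric_{M_i}\| \leq \lambda_1$ and $\inj M_i \geq i_0$ places the $M_i$ in the scope of Anderson's $C^{2,\alpha}$-precompactness theorem for pointed convergence.

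The main technical step, and where I expect the chief obstacle, is to promote this local precompactness to a global one by producing a uniform upper bound on $\diam M_i$. The volume hypothesis $\vol M_i \leq C/(\pi-\conj M_i)^\beta$ allows the volume, and a priori the diameter, to grow as $\epsilon_i \to 0$, so the bound does not fall out of the standing estimates alone. I would attempt to obtain it from a quantitative sharpening of Green's Theorem~\ref{sec:introduction-5}: under the present curvature and $\|\nabla\Ric\|$ bounds, the scalar-curvature surplus $\int_{M}\scal\dvol - n(n-1)\vol M$ should dominate a definite positive power of $(\pi-\conj M)$ multiplied by $\vol M$, so that the hypothesis $\int_{M_i}\scal\dvol \geq n(n-1)\vol M_i$ combined with the volume upper bound with exponent $\beta<1$ rules out diameter blow-up. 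Working out the precise form of this quantitative Green inequality, and seeing exactly how the condition $\beta<1$ enters, will be the delicate heart of the argument.

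Once such a uniform diameter bound is in place, Anderson's theorem yields a subsequential $C^{2,\alpha}$-limit $(M_\infty, g_\infty)$ which is a closed Riemannian $n$-manifold of class $C^{2,\alpha}$. By lower semi-continuity of the conjugate radius under $C^2$-convergence one has $\conj M_\infty \geq \pi$, and $C^{2,\alpha}$-convergence is more than enough to pass the integral inequality $\int\scal\dvol \geq n(n-1)\vol$ to $M_\infty$. Green's Theorem~\ref{sec:introduction-5} then forces $\conj M_\infty = \pi$, and its rigidity clause gives that $M_\infty$ is isometric to a spherical space form of constant curvature one. Since $C^{2,\alpha}$-convergence to a smooth closed manifold automatically entails diffeomorphism for all sufficiently large $i$ via standard Cheeger--Gromov arguments, this contradicts the assumption that no $M_i$ is diffeomorphic to a spherical space form, completing the proof.
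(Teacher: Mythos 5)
Your overall strategy (contradiction, Anderson precompactness, Green's rigidity in the limit, $C^{2,\alpha}$-stability of diffeomorphism type) is the right family of ideas, but two of your steps do not hold up. First, the ``first observation'' is false: a lower bound $\conj M_i\geq\pi-\epsilon_i$ does \emph{not} give a pointwise upper bound on sectional (or Ricci) curvature via Rauch. Rauch runs the other way (an upper curvature bound gives a lower conjugate radius bound), and its Myers-type converse only excludes curvature that is large \emph{along an entire geodesic}; curvature concentrated on small regions is perfectly compatible with $\conj M\geq\pi-\epsilon$. The needed two-sided Ricci bound (which is what Anderson's theorem actually requires, together with $\|\nabla\Ric\|\leq\lambda_1$ and $\inj\geq i_0$) must instead be extracted from the hypothesis $\|\nabla\Ric_M\|\leq\lambda_1$: if $\Ric(Y,Y)$ were large at a point, the derivative bound forces it to stay large along the geodesic in direction $Y$ for a definite time, and then a Myers-type argument produces a conjugate point before $i_0$, contradicting the conjugate radius bound. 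This is exactly Lemma~\ref{sec:preliminaries-3} of the paper; without it (or some substitute) your compactness step has no two-sided Ricci control.

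Second, the step you yourself flag as ``the delicate heart'' --- a uniform diameter bound via a ``quantitative Green inequality'' --- is not established, and the inequality you posit goes the wrong way: Green's argument bounds the surplus $\int_M\scal\,\dvol-n(n-1)\vol M$ from \emph{above} by a multiple of $(\pi-\conj M)\vol M$, it does not bound it from below, so it cannot by itself penalize large diameter. In fact no a priori diameter bound is needed. The paper works with \emph{pointed} $C^{2,\alpha}$ convergence to a possibly noncompact limit and instead uses the volume hypothesis where $\beta<1$ really enters: integrating Green's index-form inequality over $SM_i$ gives a total quantity bounded above by $\frac{C}{(\pi-a_i)^{\beta}}\cdot\frac{\pi^2-a_i^2}{a_i^2}\cdot(n-1)\vol(S^{n-1})\int_0^{a_i}\sin^2\frac{\pi t}{a_i}\,dt$, which tends to $0$ because $(\pi-a_i)^{1-\beta}\to 0$. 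Since each per-geodesic term is nonnegative and converges (by $C^{2,\alpha}$ convergence of metrics, geodesics and Ricci curvatures on balls $B_R(p)$), every limit geodesic satisfies $\int_0^{\pi}\bigl((n-1)-\Ric_{M_\infty}(\gamma_\infty',\gamma_\infty')\bigr)\sin^2 t\,dt=0$, the fields $\sin t\,V(t)$ are Jacobi fields, and the limit has constant curvature $1$ on every $B_R(p)$, hence everywhere; compactness of the limit then comes for free (Bonnet--Myers), and pointed convergence to a compact limit yields the diffeomorphism and the contradiction. So your plan is repairable, but only by replacing both the Rauch step and the diameter-bound step with arguments of this kind; as written, the proposal has a genuine gap at its central step.
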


\

\noindent To put this result into further perspective, let us mention 
that it yields in particular various new recognition and stability theorems
for closed manifolds with positive curvatures. 

First, we have the following new sphere theorem for manifolds with mean positive scalar curvature:

\

\begin{cor}
\label{sec:introduction-1}
  For all $n \in\mathbb{N}$ and $\lambda_1,d>0$ there exists \(\epsilon=\epsilon(n,\lambda_1,d)>0\) such that every closed \(n\)-dimensional  Riemannian manifold \(M\) with
  \begin{align*}
    \int_M\scal_M\dvol_M&\geq n(n-1)\vol M & \inj M&\geq \pi -\epsilon\\
    \diam M&\leq d  & \| \nabla\Ric_M\|&\leq \lambda_1
\end{align*}
is  diffeomorphic to the standard $n$-sphere.  
\end{cor}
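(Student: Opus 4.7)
The strategy is to reduce this corollary to Theorem~\ref{sec:introduction} by verifying the additional hypotheses of that theorem from the corollary's data, and then improving ``spherical space form'' to ``standard $n$-sphere'' using the near-maximal injectivity radius. Several items are immediate: $\conj M\geq \inj M\geq \pi-\epsilon$ supplies the conjugate radius hypothesis; choosing $\epsilon<\pi/2$ yields $\inj M\geq \pi/2=:i_0$; and the integral scalar as well as the $\|\nabla\Ric_M\|$ bounds coincide in both statements. Moreover, $\|\nabla\Ric_M\|\leq\lambda_1$ implies $|\nabla\scal_M|\leq n\lambda_1$, so combining $\diam M\leq d$ with the integral mean inequality $\int_M\scal_M\geq n(n-1)\vol M$ yields the pointwise lower bound $\scal_M\geq n(n-1)-n\lambda_1 d$.

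The central obstacle is obtaining a uniform Ricci lower bound $\Ric_M\geq -\lambda_0$ with $\lambda_0=\lambda_0(n,\lambda_1,d)$, since $\|\nabla\Ric_M\|\leq\lambda_1$ by itself controls only oscillations of $\Ric_M$. My plan is to first bound the largest Ricci eigenvalue from above by a Myers-type argument: if $\Ric_p(w,w)\geq L$ for some unit $w\in T_pM$, then along the geodesic $\gamma$ issuing from $p$ in direction $w$, Lipschitz control of $\Ric$ together with parallelism of $\gamma'$ gives $\Ric_{\gamma(t)}(\gamma'(t),\gamma'(t))\geq L-\lambda_1 t$. For $L$ sufficiently large in terms of $n,\lambda_1$, this stays above $(n-1)(1+\eta)$ for some $\eta>0$ on an interval of length $\pi/\sqrt{1+\eta}<\pi-\epsilon$, so Myers' theorem forces a conjugate point within that interval, contradicting $\conj M\geq \pi-\epsilon$ for $\epsilon$ small. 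Hence the largest Ricci eigenvalue is bounded above by some $\mu_{\max}=\mu_{\max}(n,\lambda_1)$, and together with the pointwise scalar lower bound via $\mu_{\min}(\Ric)\geq \scal_M-(n-1)\mu_{\max}$, this yields the desired Ricci lower bound. Bishop--Gromov comparison then gives $\vol M\leq V(n,\lambda_0,d)$, whence $\vol M\leq C/(\pi-\conj M)^\beta$ is satisfied for $C$ sufficiently large and any $\beta<1$.

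Theorem~\ref{sec:introduction} now applies and yields that $M$ is diffeomorphic to a spherical space form $S^n/\Gamma$. For the final upgrade to the standard sphere, the proof of Theorem~\ref{sec:introduction} proceeds via $C^{k,\alpha}$-convergence, so $M$ is in fact $C^{1,\alpha}$-close to the round $(S^n/\Gamma, g_{\mathrm{round}})$; in particular $\inj M$ is close to $\inj(S^n/\Gamma, g_{\mathrm{round}})$, and the latter is at most $\pi/2$ whenever $\Gamma$ is nontrivial. Combined with $\inj M\geq \pi-\epsilon$, this forces $\Gamma$ to be trivial for $\epsilon$ sufficiently small, so $M$ is diffeomorphic to $S^n$. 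The hardest step is the Myers-type upper bound on the Ricci eigenvalue --- translating between the conjugate radius hypothesis and pointwise control of Ricci is the delicate part --- while the remaining reductions follow by standard Bishop--Gromov comparison and the compactness techniques underlying Theorem~\ref{sec:introduction}.
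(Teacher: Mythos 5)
Your proposal is correct and follows essentially the same route as the paper: the pointwise scalar lower bound from the Lipschitz bound on $\scal_M$, the diameter bound and the mean-value of the integral inequality; the Myers-type upper bound on Ricci from $\|\nabla\Ric_M\|\leq\lambda_1$ and the conjugate radius (this is exactly the paper's Lemma~\ref{sec:preliminaries-3}, which you re-derive rather than cite), giving the two-sided Ricci bound; Bishop--Gromov for the volume bound so that Theorem~\ref{sec:introduction} applies with $\beta=0$; and finally ruling out nontrivial fundamental group via the near-maximal injectivity radius, just as the paper does when it notes the resulting space form has injectivity radius $\pi$.
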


\

Since Berger's initial studies in the 1960s (see  \cite{MR0130659}, \cite{MR0238226}), 
much work in Riemannian geometry has also been devoted to the problem
to classify all Einstein manifolds which satisfy further other curvature conditions. For example,
Berger proved in this context in particular the following isolation result for
the standard round spheres, namely: 
If $(M,g)$ is a closed simply connected Einstein manifold 
of dimension n which is strictly $3n / (7n - 4)$-pinched,
then $M$ is, up to scaling, isometric to the standard $n$-sphere
(compare \cite[Section 0.33]{MR867684}).
Moreover, Brendle showed that compact Einstein $n$-manifolds, $n\ge 4$, with positive isotropic curvature
have constant sectional curvature (see \cite{MR2573825}).

\

For Einstein manifolds with positive Einstein constant, we obtain
(for a more general version see Theorem~\ref{sec:test3} below)
the following differentiable stability result, 
which does not require any further curvature bounds:

\

\begin{cor}
\label{sec:introduction-4}
 For all $n \in\mathbb{N}$ there exists \(\epsilon=\epsilon(n)>0\) 
such that every closed simply connected \(n\)-dimensional  Einstein manifold \(M\) 
with Einstein constant \(n-1\) and conjugate radius $\conj M \geq \pi-\epsilon$
  is  diffeomorphic to the standard $n$-sphere.
\end{cor}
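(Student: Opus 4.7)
The strategy is to deduce the corollary by verifying the hypotheses of Theorem~\ref{sec:introduction} with all constants depending only on $n$. Four of the six hypotheses come for free from the Einstein condition $\Ric_M=(n-1)g$: pointwise $\scal_M=n(n-1)$ gives the scalar integral inequality (with equality); $\Ric_M\geq 0\geq-\lambda_0$ for any $\lambda_0>0$; parallelism of $\Ric_M$ gives $\nabla\Ric_M\equiv 0$, so any $\lambda_1>0$ works; and Bishop--Gromov yields $\vol M\leq\vol S^n(1)=:V_0$, so with $C:=V_0$ and $\epsilon\leq 1$ one has $\vol M\,(\pi-\conj M)^\beta\leq V_0\epsilon^\beta\leq C$ for any $\beta\in[0,1)$.

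The only substantive task is to produce a uniform lower bound $\inj M\geq i_0(n)>0$. We plan to argue by contradiction: assume a sequence $(M_j,g_j)$ of simply connected closed Einstein $n$-manifolds with $\Ric_{M_j}=(n-1)g_j$, $\conj M_j\to\pi$, and $\inj M_j\to 0$. Myers' theorem bounds $\diam M_j\leq\pi$ and $|\Ric_{M_j}|$ is uniformly bounded.

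The key intermediate step is to rule out collapse, i.e.\ to show $\vol M_j\geq v_0(n)>0$. For this we would use a quantitative form of Green's integral argument: the near-equality $\conj M_j\geq \pi-1/j$ in his scalar integral estimate forces the Jacobi-field determinants along radial geodesics to be uniformly close to those on $S^n(1)$, so that integration over an exponential chart yields $\vol M_j\to\vol S^n(1)$. Non-collapse then enables Anderson's $C^{k,\alpha}$-compactness theorem for Einstein manifolds (bounded $|\Ric|$, bounded diameter, uniform volume lower bound), producing a subsequential smooth Einstein limit $M_\infty$ with $\Ric_{M_\infty}=(n-1)g_\infty$ and $\conj M_\infty=\pi$. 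Green's original rigidity identifies $M_\infty$ as a spherical space form of constant curvature one; simple connectedness passes to the limit, so $M_\infty\cong S^n(1)$. Smooth convergence to the round sphere then forces $\inj M_j\to\pi$, contradicting $\inj M_j\to 0$. Hence $\inj M\geq i_0(n)$, Theorem~\ref{sec:introduction} applies, and simple connectedness identifies the resulting spherical space form as $S^n$.

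The main obstacle I anticipate is the non-collapsing step. Without a volume lower bound Anderson-type convergence need not apply, and a sequence of Einstein manifolds with $\conj\to\pi$ could \emph{a priori} collapse to a lower-dimensional Einstein space on which Green's rigidity is silent. Promoting the Jacobi-field comparison underlying Green's theorem into a quantitative statement on the volume element---essentially a stability version of the Bishop volume comparison along the radial direction singled out by the scalar integral identity---is the delicate technical point, but it is a Riemannian-geometric strengthening of a classical computation rather than a qualitatively new ingredient.
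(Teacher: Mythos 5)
Your reduction to Theorem~\ref{sec:introduction} matches the paper's own route (Theorem~\ref{sec:test3}): the Einstein condition gives the scalar integral inequality, $\nabla\Ric_M\equiv 0$, the lower Ricci bound, and Bishop--Gromov gives the volume bound, so everything hinges on producing a uniform bound $\inj M\geq i_0(n)>0$. At exactly this point the paper does not argue by compactness at all: it invokes a known injectivity radius estimate (\cite[Theorem 3.1]{MR1692012}) saying that a lower Ricci bound together with a lower conjugate radius bound forces a lower injectivity radius bound on the universal cover (here $M$ itself, being simply connected). Your substitute for that citation contains a genuine gap.

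The non-collapsing step does not work as described. For an Einstein metric with constant $n-1$ one has $\scal_M\equiv n(n-1)$, so Green's integral inequality is an identity and near-maximality of $\conj M_j$ adds no information through it; what $\conj M_j\geq\pi-1/j$ does give is that the index forms of the test fields $\sin(\tfrac{\pi t}{a})V(t)$ are small, i.e.\ only a $\sin^2$-weighted \emph{integral} bound on the sectional curvatures $K(\gamma',V)$ along each geodesic, which does not pin down the radial Jacobi determinants pointwise. More seriously, even granting such control, integrating the Jacobian of $\exp_p$ over the ball of radius $a$ in $T_pM$ bounds the volume of the image from \emph{above} (counted with multiplicity); it yields a lower bound for $\vol M_j$ only if one already controls the multiplicity of $\exp_p$, i.e.\ essentially the injectivity radius you are trying to establish --- the argument is circular at precisely the point you flagged as delicate. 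The compactness step is also misquoted: Anderson's theorem as used in this paper (Theorem~\ref{sec:preliminaries-1}) requires an injectivity radius lower bound, not merely a volume lower bound, and non-collapsed Einstein sequences need not subconverge smoothly in general (orbifold degenerations), so ``bounded $\|\Ric\|$, diameter, and a volume lower bound'' is not enough to run the limit-plus-Green-rigidity contradiction as stated. Note finally that if you could really prove $\vol M_j\to\vol S^n(1)$, you would not need any of this machinery: the Cheeger--Colding volume stability result quoted in the paper's introduction would already give the diffeomorphism to $S^n$. The honest fix is to use (or reprove) an injectivity radius estimate from $\Ric\geq n-1$ plus a conjugate radius lower bound, which is exactly what the paper cites.
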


\

Notice here also that in \cite{MR2178969} families of inequivalent Einstein metrics with positive Einstein constant were constructed on the \((4m+1)\)-dimensional Kervaire spheres, where \(m \geq 2\).
The Kervaire spheres are known to be exotic in dimensions \(n\neq 2^k-3\), where \(k\geq 3\).
The authors of \cite{MR2178969} also conjectured that such metrics exist on all odd dimensional homotopy spheres which bound parallelizable manifolds.
This conjecture is true in dimensions less than or equal to \(15\) by \cite{MR2146519}. 

\

If we only have a positive lower bound on the Ricci-curvature we get:

\

\begin{theorem}
\label{sec:introduction-3}
 For all $n \in\mathbb{N}$ and $k >0$ there exists  \(\epsilon=\epsilon(n,k)>0\) such that every closed \(n\)-dimensional  Riemannian manifold \(M\) with
  \begin{align*}
    |\pi_1(M)|&\leq k\\
    \Ric_M&\geq n-1\\
    \conj M& \geq \pi-\epsilon
  \end{align*}
is  diffeomorphic to an Einstein manifold with Einstein constant \(n-1\).
\end{theorem}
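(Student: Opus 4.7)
I would argue by contradiction and compactness. Assume the conclusion fails; then for some fixed $n$ and $k$ there is a sequence $(M_i,g_i)$ of closed Riemannian $n$-manifolds with $|\pi_1(M_i)|\leq k$, $\Ric_{M_i}\geq n-1$, $\conj M_i\geq\pi-1/i$, and with no $M_i$ diffeomorphic to any Einstein $n$-manifold of constant $n-1$. The plan is to extract a smooth subsequential limit, identify it with the round sphere $S^n(1)$ via Green's theorem, and promote the convergence to a diffeomorphism of $M_i$ with a spherical space form (which, with the round metric, is Einstein of constant $n-1$, yielding the contradiction).

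Pass to universal covers $\tilde M_i\to M_i$; these are closed (Myers) with $k_i\leq k$ sheets and inherit $\Ric\geq n-1$ and $\conj\geq\pi-1/i$. The Ricci and conjugate radius lower bounds together yield uniform harmonic radius control, so $\{\tilde M_i\}$ is relatively compact in the pointed $C^{1,\alpha}$-topology provided volume non-collapse holds; and the latter follows from the conjugate radius bound itself, since $\exp_p$ is a local diffeomorphism on $B(0,\pi-1/i)\subset T_p\tilde M_i$ and Ricci comparison controls the volume element from above. Passing to a subsequence, $\tilde M_i\to X$ in $C^{1,\alpha}$, with $X$ a simply connected $C^{1,\alpha}$-Riemannian $n$-manifold satisfying $\Ric\geq n-1$, $\int_X\scal_X\dvol_X\geq n(n-1)\vol X$, and a conjugate radius bound $\conj X\geq\pi$ in the appropriate limiting sense. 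Green's Theorem~\ref{sec:introduction-5} applied to $X$ then forces $X=S^n(1)$.

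Consequently $\tilde M_i$ is diffeomorphic to $S^n$ for large $i$, and equivariant convergence of the deck actions $\Gamma_i\curvearrowright\tilde M_i$ to a free isometric action of a finite $\Gamma\subset O(n+1)$ on $S^n(1)$ gives $M_i=\tilde M_i/\Gamma_i$ diffeomorphic to $S^n(1)/\Gamma$ for $i$ large---a spherical space form, Einstein of constant $n-1$. The principal technical obstacle is the identification of $X$ as $S^n(1)$: the conjugate radius, defined through Jacobi fields, is not continuous under $C^{1,\alpha}$- or Gromov--Hausdorff convergence in general, so Green's rigidity must be applied via a quantitative almost-rigid variant rather than to a naive limit. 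An alternative is to smooth each $g_i$ by a short-time Ricci flow or by a convolution-style smoothing, recovering a uniform $\|\nabla\Ric\|$-bound and, via equivariant non-collapse, an $\inj$-bound, and then to invoke Theorem~\ref{sec:introduction} directly as a black box.
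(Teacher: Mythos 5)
Your proposal has a genuine gap at its central step, and it is precisely the step the paper itself cannot carry out and leaves as an open conjecture. With only a lower Ricci bound, an injectivity radius bound (which is what the conjugate radius bound plus $|\pi_1|\leq k$ actually buys, via the covering argument and the injectivity radius estimate for the universal cover), and a diameter bound, the available precompactness (Anderson--Cheeger, Theorem~\ref{sec:preliminaries-2}) is only in the $C^{0,\alpha}$ topology --- your claim of $C^{1,\alpha}$ convergence is already too strong. More seriously, you then want to apply Green's rigidity to the limit $X$ to conclude $X=S^n(1)$, which requires (i) the limit metric to be regular enough ($C^2$) for conjugate points and scalar curvature to make sense, and (ii) the bound $\conj X\geq\pi$ to survive the limit; neither holds, since the conjugate radius is not semicontinuous under $C^{0,\alpha}$ (or even $C^{1,\alpha}$) convergence. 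You flag this yourself as ``the principal technical obstacle,'' but you offer no resolution: no ``quantitative almost-rigid variant'' of Green's theorem is known, and your fallback --- smoothing by Ricci flow to recover a $\|\nabla\Ric\|$-bound and then invoking Theorem~\ref{sec:introduction} --- is explicitly ruled out in the paper's remarks: smoothing (which anyway is not controlled under a mere lower Ricci bound) preserves at best \emph{some} lower bound on the conjugate radius, whereas Theorem~\ref{sec:introduction} needs the conjugate radii to stay close to $\pi$. In effect you are trying to prove the stronger ``spherical space form'' conclusion, which is exactly the paper's open conjecture, and the weaker stated conclusion never gets a complete argument.

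The paper's actual proof sidesteps the limit's conjugate radius entirely. Green's inequality (Theorem~\ref{sec:proof-main-results}) is applied to the manifolds $M_i$ themselves, where $\conj M_i\geq\pi-\epsilon$ does hold, to show that the average scalar curvature is pinched between $n(n-1)$ and $n(n-1)\pi^2/a_i^2$; combined with the pointwise inequality $\Ric_{M_i}\geq(n-1)g_i$ this gives $\|\Ric_{M_i}-(n-1)g_i\|_{L^1}\to 0$. The $C^{0,\alpha}$ limit $g_\infty$ is then a weak $L^{1,p}$ solution of the Einstein equation, and elliptic regularity in harmonic coordinates bootstraps it to a smooth (indeed real-analytic) Einstein metric with constant $n-1$; since $M_\infty$ is diffeomorphic to $M_i$ for large $i$, the stated conclusion follows. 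If you want to salvage your write-up, replace the attempted identification of the limit with $S^n(1)$ by this almost-Einstein $L^1$ estimate and the regularity argument for the limit metric.
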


\

We conjecture that a manifold $M$ as in the theorem above is actually also already diffeomorphic
to a spherical space form.
This conjecture is in part motivated by the following fact:
 For each \(n\in \mathbb{N}\) there exists an \(\epsilon(n)>0\) such that every closed \(n\)-manifold \(M\) with \(\inj M\geq \pi-\epsilon\) and \(\Ric_M\geq n-1\) is diffeomorphic to a sphere. 
This follows from the following argument.
 An inequality relating the injectivity radius of a manifold to its volume due to Berger \cite{MR597027} implies that the volume of a manifold with injectivity radius close to \(\pi\) is close to the volume of the round sphere with radius one.
Therefore, by a result of Cheeger and Colding \cite[Theorem A.1.10]{MR1484888}, it follows that \(M\) is diffeomorphic to a sphere.

\

For manifolds with positive sectional curvature we get:

\

\begin{cor}
\label{sec:introduction-2}
 For all $n \geq 4$ and $k,\delta >0$ there exists \(\epsilon=\epsilon(n,k,\delta)>0\) such that every closed \(n\)-dimensional  Riemannian manifold \(M\) with
  \begin{align*}
    |\pi_1(M)|&\leq k\\
    \sec_M&\geq 1-\frac{3}{n+2}+\delta\\
    \Ric_M&\geq n-1\\
    \conj M& \geq \pi-\epsilon
  \end{align*}
is  diffeomorphic to a spherical space form.
\end{cor}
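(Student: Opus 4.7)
The plan is to run a contradiction/convergence argument that reduces the corollary to Brendle's rigidity theorem for Einstein manifolds with positive isotropic curvature \cite{MR2573825}. Suppose no such $\epsilon$ exists: then there is a sequence $(M_i,g_i)$ of counterexamples with $\conj M_i\to\pi$ that satisfy the remaining hypotheses uniformly. Applying Theorem~\ref{sec:introduction-3}, and inspecting its proof, one obtains for $i$ large an Einstein metric $g_i'$ on $M_i$ with Einstein constant $n-1$ which, after a harmonic diffeomorphism, is $C^{k,\alpha}$-close (for some $k\geq 2$) to $g_i$; passing to a subsequence, the $(M_i,g_i')$ converge in $C^{k,\alpha}$ to a closed Einstein $n$-manifold $(N,g_\infty)$ with $\Ric_{g_\infty}=(n-1)g_\infty$, and the $M_i$ are diffeomorphic to $N$ for $i$ large.

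Since $k\geq 2$ the full curvature tensor converges in $C^{0}$, so the pointwise lower bound $\sec_{g_i}\geq 1-\tfrac{3}{n+2}+\delta$ carries over to the limit: $\sec_{g_\infty}\geq \tfrac{n-1}{n+2}+\delta$ on $N$. At every point $p\in N$ and every unit vector $v$ the Einstein equation yields $\sum_{i=1}^{n-1}\sec_{g_\infty}(v,e_i)=n-1$ for any orthonormal basis of $v^\perp$, and therefore
\[
K_{\max}(p)\;\leq\;(n-1)-(n-2)K_{\min}(p)\;\leq\;\tfrac{4(n-1)}{n+2}-(n-2)\delta .
\]
Multiplying the lower bound $K_{\min}\geq \tfrac{n-1}{n+2}+\delta$ by $4$ gives $4K_{\min}\geq \tfrac{4(n-1)}{n+2}+4\delta>K_{\max}$, i.e.\ the strict pointwise pinching $K_{\min}(p)>\tfrac14 K_{\max}(p)$.

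Using Berger's standard estimate $|R_{1234}|\leq \tfrac{2}{3}(K_{\max}-K_{\min})$ for orthonormal $4$-frames, the isotropic curvature satisfies
\[
R_{1313}+R_{1414}+R_{2323}+R_{2424}-2R_{1234}\;\geq\;\tfrac{16}{3}K_{\min}-\tfrac{4}{3}K_{\max}\;>\;0,
\]
so $(N,g_\infty)$ has strictly positive isotropic curvature. Because $n\geq 4$, Brendle's theorem \cite{MR2573825} then forces $(N,g_\infty)$ to have constant sectional curvature, and the Einstein normalization $\Ric_{g_\infty}=(n-1)g_\infty$ pins this constant to $1$. Hence $N$ is a spherical space form, contradicting the assumption on $M_i$. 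The main technical obstacle is showing that the Einstein approximations coming out of Theorem~\ref{sec:introduction-3} really can be taken $C^{k,\alpha}$-close to the original metrics for some $k\geq 2$, so that the strict inequality in $\sec$ passes to the Einstein limit; once this regularity of the approximation is in hand, the argument is reduced to the elementary pinching calculation above followed by a direct application of Brendle's theorem.
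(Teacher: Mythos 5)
Your overall scheme (argue by contradiction, converge to a closed Einstein limit with constant $n-1$ via Theorem~\ref{sec:introduction-3}, note $M_i$ is diffeomorphic to the limit, then invoke a rigidity theorem for Einstein metrics) is the same as the paper's, but there is a genuine gap at exactly the step you flag as the ``main technical obstacle,'' and the mechanism you propose to close it does not exist. The proof of Theorem~\ref{sec:introduction-3} produces the Einstein limit only through the Anderson--Cheeger precompactness theorem (Theorem~\ref{sec:preliminaries-2}), i.e.\ through $C^{0,\alpha}$ convergence: under the hypotheses of the corollary the metrics $g_i$ carry only a lower Ricci bound, a lower injectivity radius bound and a Myers diameter bound --- there is no upper curvature bound and no bound on $\nabla\Ric_{g_i}$, so no harmonic-coordinate bootstrapping applies to the $g_i$ themselves, and nothing in that proof gives an Einstein metric $g_i'$ on $M_i$ that is $C^{k,\alpha}$-close to $g_i$ for any $k\geq 1$, let alone $k\geq 2$. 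In particular the curvature tensors of $g_i$ do not converge in $C^0$, and your route for transporting the bound $\sec_{g_i}\geq 1-\tfrac{3}{n+2}+\delta$ to the limit breaks down as stated. (The paper's own remark after Corollary~\ref{sec:test4} emphasizes that only $C^{0,\alpha}$ control is available in this setting.)

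The gap is repairable, but by a different and more elementary device than the one you propose: a lower sectional curvature bound is a triangle-comparison (Alexandrov) condition and is therefore preserved under Gromov--Hausdorff limits, hence under the $C^{0,\alpha}$ convergence at hand; since the limit $(N,g_\infty)$ is a smooth Einstein manifold, this gives $\sec_{g_\infty}\geq 1-\tfrac{3}{n+2}+\delta$ pointwise, which is exactly how the paper transfers the bound. Once that substitution is made, the rest of your argument is correct and is a genuinely different endgame from the paper's: the paper quotes rigidity/classification results for Einstein manifolds with $\sec\geq 1-\tfrac{3}{n+2}$ from \cite{MR3157995} (and \cite{MR2078673} in dimension four), whereas you derive, from the Einstein trace identity, the pointwise bound $K_{\max}\leq (n-1)-(n-2)K_{\min}$, conclude strict pointwise quarter-pinching, pass to positive isotropic curvature via Berger's estimate $|R_{1234}|\leq\tfrac23(K_{\max}-K_{\min})$, and finish with Brendle's theorem \cite{MR2573825}. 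That alternative rigidity argument is self-contained and attractive, but as submitted the proof is incomplete because the curvature-transfer step is left unresolved and the proposed $C^{k,\alpha}$-approximation claim is not obtainable from Theorem~\ref{sec:introduction-3}.
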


\

\begin{remark}
  The lower bound on the conjugate radius of \(M\) in Theorem~\ref{sec:introduction} cannot be replaced by a lower bound on the diameter.
	
  This is due to the following fact: any manifold \(M\) of dimension at least three which admits a metric of scalar curvature \(>c>0\) admits a metric of scalar curvature \(>c'(c)>0\) and arbitrarily large diameter.
  Indeed, one can construct such a metric as follows:
  By results which were independently proven by Gromov and Lawson \cite{MR577131} and Schoen and Yau \cite{MR535700}, we can assume that \(M\) has a metric of positive scalar curvature 
	which is a 'torpedo' metric in the neighborhood of a point \(p\in M\).
  We can then choose this torpedo to be arbitrarily long without decreasing the scalar curvature.
\end{remark}

\

The main ingredients in the proofs of the above results consist in
using $C^{k,\alpha}$ convergence techniques, developed in particular
by Anderson \cite{MR1074481}  and Anderson and Cheeger \cite{MR1158336}, 
and appropriate modifications of Green's original arguments
along with convergence properties of the injectivity and conjugate radius.

\

\begin{remark}
We actually expect that one can remove the bounds on the covariant derivative of the Ricci curvature 
and the volume in Theorem~\ref{sec:introduction}
without effecting the conclusion of the theorem.
However, any sort of convergence theory for scalar curvature has still to emerge,
and all techniques which are currently at disposal require these additional bounds for the proof to work.

Namely, the bound on the covariant derivative is needed to construct a sequence of Riemannian manifolds 
which converges in \(C^{2,\alpha}\)-topology.
This strong form of convergence guarantees that certain geodesics and the conjugate radii 
of these manifolds will converge.
The volume bound is necessary for showing that a certain sequence of integrals over the tangent unit sphere bundles of the manifolds converges.
\end{remark}

\

\begin{remark}
  If one has a sequence of Riemannian manifolds which satisfy a two-sided sectional curvature bound and a lower bound on the conjugate radii, then using Ricci-flow one can smooth this sequence, so that all the manifolds in the sequence satisfy two-sided bounds on the Ricci-curvature and its covariant derivatives.
  But it is completely unclear what happens to the conjugate radii of the manifolds.
  The only thing one knows about it is that there is some lower bound.
  For the proof of Theorem \ref{sec:introduction} we need that the conjugate radii of the sequence converge to \(\pi\).
  Therefore we cannot replace the bounds on the Ricci-curvature and its covariant derivatives by a two-sided sectional curvature bound.
\end{remark}

\ 

\begin{remark}
  Recently it has been shown by Gromov \cite{MR3201312} that if a sequence of \(C^2\)-metrics \(g_i\) on a manifold with \(\scal_{g_i}\geq n(n-1)\) converges in the \(C^0\)-topology to a \(C^2\)-metric \(g\), then we have \(\scal_{g}\geq n(n-1)\).
  Since the injectivity radius is upper semi-continuous in \(C^1\)-topology, a version of Theorem~\ref{sec:introduction} which only needs a bound on the Ricci-curvature and not on its covariant derivative would follow from Green's theorem, if one can guarantee that the limit metric is at least of class \(C^2\).
\end{remark}

\

The remaining parts of the present note are structured as follows: After
recalling relevant preliminaries in Section \ref{sec:preliminaries}, Section \ref{sec:proof} is devoted
to the proofs of Theorem~\ref{sec:introduction}
and Corollary~\ref{sec:introduction-1}.
The final Section \ref{sec:app}
is concerned with the proofs of Corollary~\ref{sec:introduction-2}, Theorem~\ref{sec:introduction-3}, and a more
general version of Corollary~\ref{sec:introduction-4}.

\

It is our pleasure to thank Aaron Naber for helpful comments, especially
for pointing out to us that a reference, which we used in the first version
of this note in the proof of Theorem~\ref{sec:test5}, actually contained incorrect results
so that we had to furnish a different proof of Theorem \ref{sec:test5}. 
In addition, we would like to thank a first referee whose comments led us to
Lemma \ref{sec:preliminaries-3} and thus to a significant improvement
of the main result, Theorem \ref{sec:introduction}, 
from both-sided bounds on the Ricci curvature to now just a lower one.

\

\section{Preliminaries}
\label{sec:preliminaries}

\noindent Let us first recall Green's theorem in its original version as well as its proof,
since we will have to refer to them in our later considerations.

\begin{theorem}[{\cite[Theorem 5.1]{MR0155271}, \cite[Proposition 5.64]{MR496885}}]
\label{sec:proof-main-results}
  Let  \(M\) be a closed Riemannian manifold of dimension \(n\) with conjugate radius \(\conj M\geq a\).
  Then
  \begin{equation*}
    \vol M \geq \frac{a^2}{n(n-1)\pi^2} \int_M \scal_M\;\dvol_M.
  \end{equation*}
  Moreover, equality holds if and only if \(M\) has constant sectional curvature \(\pi^2/a^2\).
\end{theorem}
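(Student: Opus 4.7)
The plan is to prove the inequality by combining a Sturm/index-form argument along each unit-speed geodesic with an averaging argument over the unit tangent bundle $SM$, exploiting invariance of the Liouville measure under the geodesic flow. The natural test function is the first Dirichlet eigenfunction of $-d^2/dt^2$ on $[0,a]$, namely $f(t) = \sin(\pi t / a)$, which will deliver the sharp constant.

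For the pointwise step, I would fix a unit-speed geodesic $\gamma \colon [0,a] \to M$ together with a parallel orthonormal frame $e_1, \ldots, e_{n-1}$ along $\gamma$ with each $e_i \perp \gamma'$. The hypothesis $\conj M \geq a$ guarantees the absence of conjugate points on $[0,a]$, so the index form is non-negative on variation fields vanishing at both endpoints of $\gamma$. Applying it to $Y_i(t) = f(t)\, e_i(t)$, summing over $i$, and using $\sum_i \langle R(e_i, \gamma')\gamma', e_i\rangle = \Ric(\gamma', \gamma')$ yields
$$(n-1)\int_0^a f'(t)^2\,dt \;\geq\; \int_0^a f(t)^2\, \Ric(\gamma'(t), \gamma'(t))\,dt,$$
and with $f(t) = \sin(\pi t / a)$ the two one-variable integrals evaluate to $\pi^2/(2a)$ and $a/2$.

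Next I would integrate this inequality over $SM$ against the Liouville measure $\mu$, whose invariance under the geodesic flow $\phi_t$ gives, for each fixed $t$,
$$\int_{SM}\Ric(\phi_t v,\phi_t v)\,d\mu(v)\;=\;\int_{SM}\Ric(v,v)\,d\mu(v)\;=\;\frac{\vol(S^{n-1})}{n}\int_M \scal_M\,\dvol_M,$$
the last equality coming from the trace identity $\int_{S_pM}\Ric(v,v)\,dv = \frac{\vol(S^{n-1})}{n}\scal_M(p)$. Combined with $\vol(SM) = \vol(S^{n-1})\cdot\vol M$, a Fubini swap reduces the averaged inequality precisely to the claimed bound. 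For the equality case, equality in the volume inequality forces equality in the index-form bound for $\mu$-a.e.\ $v$, which in turn forces each $Y_i = f e_i$ to be a Jacobi field; the Jacobi equation together with $f'' = -(\pi/a)^2 f$ then gives $R(e_i,\gamma')\gamma' = (\pi^2/a^2)\,e_i$ along $\gamma$, i.e.\ $K(e_i\wedge\gamma'(t)) = \pi^2/a^2$ with no transverse mixing. By continuity of the curvature tensor and density of the a.e.\ geodesics, every $2$-plane at every point has sectional curvature $\pi^2/a^2$.

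The main obstacle I expect lies in the rigidity half. The direct inequality is a clean integration-by-parts, but upgrading $\mu$-a.e.\ saturation in $SM$ to constant sectional curvature on every $2$-plane demands (i) a continuity/regularity argument that promotes the a.e.\ statement to an everywhere one, and (ii) verification that as $v$ ranges over $SM$ and $i$ over $1,\dots,n-1$ the pairs $(e_i,\gamma'(0))$ sweep out every $2$-plane at each point. The bookkeeping in the averaging step — sphere volumes, the Ricci trace identity, and Liouville invariance — is routine but must be executed carefully in order to land on the precise constant $a^2/(n(n-1)\pi^2)$.
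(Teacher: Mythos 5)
Your proposal is correct and follows essentially the same route as the paper's proof: the index form applied to $\sin(\pi t/a)$ times parallel normal fields, summed to produce the Ricci bound, then averaged over the unit sphere bundle with the flow-invariant Liouville measure and the trace identity $\int_{S_pM}\Ric(v,v)\,d\sigma=\tfrac{\vol(S^{n-1})}{n}\scal_M(p)$, with rigidity coming from the vanishing of the index form forcing the test fields to be Jacobi fields. Your extra care in upgrading the $\mu$-a.e.\ saturation to an everywhere statement (via continuity of the per-geodesic deficit) is a point the paper's proof passes over silently, but it is the same argument in substance.
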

\begin{proof}
  Let \(\gamma:[0,a]\rightarrow M\) be a geodesic.
  Since \(\conj M\geq a\), the index form \(I(X,X)\) is non-negative for any vector field \(X\) along \(\gamma\) with \(X(0)=0\) and \(X(a)=0\).
  In particular, in the special case \(X(t)=\sin (\frac{\pi t}{a})V(t)\), where \(V(t)\) is a parallel vector field along \(\gamma\),
	it follows that
  \begin{equation*}
    \int_{0}^{a}\left( \frac{\pi^2}{a^2}\cos^2\frac{\pi t}{a}- K(\gamma'(t),V(t))\sin^2\frac{\pi t}{a}\right)\;dt\geq 0.
  \end{equation*}
  Let \(X_1,\dots,X_{n-1}\) be \(n-1\) pairwise orthogonal vector fields as above.
  Adding up the above integrals, we obtain:
  \begin{equation*}
    \int_{0}^{a}\left( (n-1)\frac{\pi^2}{a^2}\cos^2\frac{\pi t}{a}- \Ric(\gamma'(t),\gamma'(t))\sin^2\frac{\pi t}{a}\right)\;dt\geq 0.
  \end{equation*}
  Now we integrate this inequality over all geodesics of length \(a\). To do so, we fix some notation:
  Let \(\zeta^t\) be the geodesic flow on the unit sphere bundle \(SM\) associated to the tangent bundle of \(M\).
  Moreover, let \(\mu_1\) be the canonical measure induced by \(g\) on \(SM\).
  Note that \(\mu_1\) is invariant under \(\zeta^t\) and that \(\gamma'(t)=\zeta^t(u)\), where \(\gamma\) is the geodesic with \(\gamma'(0)=u\).
  Applying Fubini's theorem, we obtain:
 \begin{align*}
    0&\leq\int_{SM}\int_{0}^{a}\left( (n-1)\frac{\pi^2}{a^2}\cos^2\frac{\pi t}{a}- \Ric(\zeta^t(u),\zeta^t(u))\sin^2\frac{\pi t}{a}\right)\;dtd\mu_1\\
    &=\vol(M)\vol(S^{n-1})(n-1)\frac{\pi^2}{a^2}\int_{0}^a \cos^2\frac{\pi t}{a}\;dt - \int_{SM\times[0,a]} \Ric(u,u)\sin^2\frac{\pi t}{a}\;dtd\mu_1\\
    &=\vol(M)\vol(S^{n-1})(n-1)\frac{\pi^2}{a^2}\int_{0}^a \cos^2\frac{\pi t}{a}\;dt - \int_{SM} \Ric(u,u)\;d\mu_1\int_0^a \sin^2\frac{\pi t}{a}\;dt\\
    &=\vol(M)\vol(S^{n-1})(n-1)\frac{\pi^2}{a^2}\int_{0}^a \cos^2\frac{\pi t}{a}\;dt -\\
    &\quad\quad\quad \int_M\int_{S_mM} \Ric(u,u)\;d\sigma \dvol_M\int_0^a \sin^2\frac{\pi t}{a}\;dt,
  \end{align*}
  where \(\sigma\) is the canonical measure on \(S_mM\) and \(\vol(S^{n-1})\) is the volume of the \((n-1)\)-dimensional sphere of radius one.
	
  A general formula for quadratic forms on Euclidean space yields
  \begin{equation*}
    \int_{S_mM} \Ric(u,u)\;d\sigma=\vol(S^{n-1})\frac{1}{n}\scal_M(m).
  \end{equation*}
  Now the first claim follows since
  \begin{equation*}
    \int_0^a\cos^2{\frac{\pi t}{a}}\;dt=\int_0^a\sin^2{\frac{\pi t}{a}}\;dt.
  \end{equation*}

  In the case where one actually has equality in the above inequality, it follows that \(I(X_i,X_i)=0\) for all \(i\).
  Therefore all vector fields  \(X_i\) are Jacobi fields, and hence
  it follows that \(M\) has constant curvature \(\pi^2/a^2\).
\end{proof}

As corollaries to the above theorem, one immediately obtains Theorem~\ref{sec:introduction-5} and Corollary~\ref{sec:introduction-6}.

%\begin{cor}
%  Let \(M\) be an \(n\)-dimensional Riemannian manifold such that
%  \begin{equation*}
%    \int_M\scal_M\dvol_M \geq n(n-1)\vol M.
%  \end{equation*}
%  Then we have \(\conj M\leq \pi\),  and equality here holds if and only if \(M\) has constant sectional curvature one.
%\end{cor}

\

We also recall the concept of \(C^{k,\alpha}\) convergence for pointed complete Riemannian manifolds.
Further details on these concepts can be found in, e.g., \cite[Chapter 10]{MR2243772}.

A sequence of pointed complete Riemannian manifolds \((M_i,p_i,g_i)\) converges in the pointed \(C^{k,\alpha}\) topology to
 a pointed manifold \((M_\infty,p_\infty,g_\infty)\) 
if for every \(R>0\) there is a domain \(\Omega\supset B_R(p_\infty)\subset M_\infty\) and if, 
for large \(i\), there are embeddings \(F_i:\Omega\rightarrow M_i\) such that \(F_i(\Omega)\supset B_R(p_i)\) and 
such that the pull-backs \(F_i^*g_i\) converge on \(\Omega\) to \(g_\infty\) in the \(C^{k,\alpha}\) H\"older topology.
If there is an upper bound \(d\) for the diameters of all of the \(M_i\), then, for \(R>d\), \(B_R(p_\infty)\) 
(and therefore also \(M_\infty\)) will be diffeomorphic to \(M_i\) if \(i\) is sufficiently large.
Therefore, in this case one can also speak about mere (unpointed) \(C^{k,\alpha}\) convergence.

\

Anderson proved a \(C^{k,\alpha}\) precompactness theorem for manifolds with bounded Ricci curvature.
To state it, we introduce the following notation:

For \(n\in \mathbb{N}\), \(i_0>0\), \(\lambda_0,\dots,\lambda_k>0\), we denote by \(\mathcal{M}(n,i_0,\lambda_0,\dots,\lambda_k)\) the class of \(n\)-dimensional pointed complete Riemannian manifolds \(M\) with
\begin{align*}
  \|\nabla^i\Ric_M\| \leq \lambda_i & \text{ for } i=0,\dots,k\\
  \inj M \geq i_0.
\end{align*}

We can now state Anderson's result.

\begin{theorem}[\cite{MR1074481}]
\label{sec:preliminaries-1}
  The class \(\mathcal{M}(n,i_0,\lambda_0,\dots,\lambda_k)\) is precompact in the pointed \(C^{k+1,\alpha}\) topology.
\end{theorem}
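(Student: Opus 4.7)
The plan is to reduce pointed $C^{k+1,\alpha}$ precompactness to two ingredients: a uniform positive lower bound on a harmonic coordinate radius, and uniform $C^{k+1,\alpha}$ estimates for the metric tensor in such coordinates. Once both are in place, a standard diagonal Arzel\`a--Ascoli argument produces the limit.

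First I would establish that under the hypotheses $\|\Ric_M\|\leq \lambda_0$ and $\inj M\geq i_0$, there is a uniform $r_h=r_h(n,i_0,\lambda_0)>0$ such that around every point $p\in M$ one can find harmonic coordinates $(x^1,\dots,x^n)$ on $B_{r_h}(p)$ in which the metric components $g_{ij}$ are uniformly close to $\delta_{ij}$ in $C^{1,\alpha}$ norm. This is the main step and the principal obstacle: it is proved by a contradiction/blow-up argument. Supposing $r_h\to 0$ along some sequence, one rescales to harmonic radius $1$ and uses Cheeger--Gromov compactness together with the $\inj$ lower bound (which guarantees no collapse) to extract a $C^{1,\alpha}$ limit; on the limit the Ricci tensor vanishes after rescaling, so harmonic coordinates of radius $1$ exist, contradicting the assumed failure at the approximants. (This is Anderson's harmonic radius estimate.)

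Next I would bootstrap regularity using the fundamental fact that in harmonic coordinates the Ricci tensor satisfies a quasi-linear elliptic equation of the form
\begin{equation*}
\tfrac{1}{2}\Delta_g g_{ij}+Q_{ij}(g,\partial g)=-\Ric_{ij},
\end{equation*}
where $Q_{ij}$ is quadratic in $\partial g$ with coefficients smooth in $g$. Starting from $g\in C^{1,\alpha}$, the right-hand side lies in $C^{0,\alpha}$, so Schauder elliptic regularity upgrades $g$ to $C^{2,\alpha}$. Inductively, the uniform pointwise bounds $\|\nabla^j\Ric\|\leq \lambda_j$ for $j\leq k$ translate, via the Christoffel symbols computed from the already-known regularity of $g$, into uniform $C^{k,\alpha}$ bounds on $\Ric_{ij}$ in the chart, and re-applying Schauder estimates to the above equation yields uniform $C^{k+1,\alpha}$ bounds on $g$ (with constants depending only on $n$, $i_0$, $\lambda_0,\dots,\lambda_k$ and $r_h$).

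Finally, given a sequence $(M_i,p_i,g_i)\in\mathcal{M}(n,i_0,\lambda_0,\dots,\lambda_k)$, I would cover a ball $B_R(p_i)$ by a controlled number of harmonic charts of radius $r_h/2$ centered at points of an $r_h/4$-net; the uniform $C^{k+1,\alpha}$ bounds on the metric in each chart, together with uniform bounds on the transition maps (which are harmonic functions between harmonic charts and hence enjoy the same estimates), allow one to extract via Arzel\`a--Ascoli and a standard diagonal procedure a subsequence whose charts, metrics, and transitions converge in $C^{k+1,\alpha}$. Gluing the limit charts produces a pointed complete limit manifold $(M_\infty,p_\infty,g_\infty)$ together with embeddings $F_i:\Omega\to M_i$ realizing pointed $C^{k+1,\alpha}$ convergence, completing the proof.
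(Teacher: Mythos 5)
The paper offers no proof of this statement -- it is quoted directly from Anderson \cite{MR1074481} -- and your sketch is exactly Anderson's original argument: a uniform lower bound on the \(C^{1,\alpha}\) harmonic radius under \(\inj M\geq i_0\), \(\|\Ric_M\|\leq\lambda_0\) obtained by a blow-up/contradiction argument, elliptic (Schauder) bootstrap of \(\tfrac12\Delta_g g_{ij}+Q_{ij}(g,\partial g)=-\Ric_{ij}\) in harmonic coordinates using the bounds on \(\nabla^j\Ric\), and a covering plus diagonal Arzel\`a--Ascoli argument with harmonic transition maps. The outline is correct; the only slight imprecision is in the base case \(k=0\), where \(\Ric\) is merely bounded (not \(C^{0,\alpha}\)), so no bootstrap to \(C^{2,\alpha}\) is available or needed -- the asserted \(C^{1,\alpha}\) precompactness there comes straight from the harmonic radius estimate, and the Schauder induction only starts once \(\|\nabla\Ric\|\leq\lambda_1\) is assumed.
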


In the case where there is only a lower Ricci curvature bound, the
following precompactness result has been shown by Anderson and Cheeger \cite{MR1158336}.

\begin{theorem}
\label{sec:preliminaries-2}
  Let \(\lambda \in \R\) and \(i_0,d>0\). Denote by \(\mathcal{N}(n,i_0,d,\lambda)\) the class of \(n\)-dimensional closed Riemannian manifolds \(M\) with \(\Ric_M\geq\lambda\), \(\inj M \geq i_0\) and \(\diam M \leq d\).

Then the class \(\mathcal{N}(n,i_0,d,\lambda)\) is precompact in the \(C^{0,\alpha}\) topology.
\end{theorem}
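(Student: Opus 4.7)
The plan is to combine Gromov's classical precompactness theorem (which uses only a lower Ricci bound and a diameter bound) with the harmonic-coordinate techniques developed by Anderson to upgrade Gromov--Hausdorff convergence to $C^{0,\alpha}$ convergence. The injectivity radius bound plays a crucial role in ensuring both that limits are genuine manifolds and that harmonic coordinates with uniform control can be constructed.

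First I would invoke Gromov's precompactness theorem: under $\Ric_M \geq \lambda$ and $\diam M \leq d$, Bishop--Gromov volume comparison yields a uniform bound on the number of $\epsilon$-balls needed to cover any $M \in \mathcal{N}(n,i_0,d,\lambda)$, so the class is precompact in the Gromov--Hausdorff topology. Given any sequence in $\mathcal{N}$, extract a Gromov--Hausdorff convergent subsequence $M_i \to X$.

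Next I would establish a uniform lower bound for the $C^{1,\alpha}$-harmonic radius $r_h(p)$ at each point $p \in M_i$, i.e.\ a definite scale on which harmonic coordinates exist in which the metric components are close to the Euclidean metric in the $C^{1,\alpha}$ norm (which then yields $C^{0,\alpha}$ control in the sense of the theorem). Under $\Ric \geq \lambda$ and $\inj \geq i_0$, Anderson's key observation (made precise together with Cheeger) is that such a uniform estimate holds: assuming for contradiction that $r_h(p_i) \to 0$ while the curvature and injectivity radius bounds persist, one rescales by $r_h(p_i)^{-1}$ and extracts a pointed limit $(N_\infty,p_\infty)$ which is a complete manifold with $\Ric \geq 0$, $\inj = \infty$, and harmonic radius equal to $1$ at $p_\infty$; by the Cheeger--Colding almost splitting / volume rigidity arguments this limit must be Euclidean space, which has infinite harmonic radius, a contradiction.

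Once the harmonic radius is uniformly bounded below, on each harmonic chart the metric tensors $g_i$ satisfy uniform $C^{1,\alpha}$ estimates; together with the diameter bound, a finite cover by such charts, and the Arzel\`a--Ascoli theorem applied componentwise, one extracts a subsequence converging in $C^{0,\alpha'}$ for any $\alpha' < \alpha$, and a standard diagonal/reindexing argument produces the required $C^{0,\alpha}$ precompactness. The main obstacle is the uniform lower bound on the harmonic radius: with two-sided Ricci bounds this is a direct elliptic-regularity estimate on the quasilinear equation $\Delta g_{ij} = Q(g,\partial g) + \Ric_{ij}$ in harmonic coordinates, but with only a lower bound one must rely on the more delicate blow-up/rigidity argument sketched above, which is the heart of the Anderson--Cheeger contribution.
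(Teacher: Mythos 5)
The paper itself does not prove this statement; it is quoted from Anderson--Cheeger \cite{MR1158336}, so your proposal has to be measured against their argument — and there is a genuine gap at its center. You claim a uniform lower bound on the \emph{\(C^{1,\alpha}\)} harmonic radius under only \(\Ric\geq\lambda\) and \(\inj\geq i_0\). That is strictly stronger than what Anderson--Cheeger establish (they obtain only \(C^{0,\alpha}\), respectively \(W^{1,p}\), control of the metric in harmonic coordinates, which is exactly why the theorem asserts \(C^{0,\alpha}\) rather than \(C^{1,\alpha'}\) precompactness), and your blow-up sketch does not prove it. The argument you describe is Anderson's argument for the two-sided bound \(\|\Ric\|\leq\lambda\). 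Trace where it breaks with a one-sided bound: after rescaling at points (almost) minimizing the harmonic radius you have \(\Ric\geq-\epsilon_i\to 0\), \(\inj\to\infty\), harmonic radius \(\geq 1\) with value \(1\) at the base points. Even granting that the pointed limit is, as a metric space, flat \(\mathbb{R}^n\) (with modern tools this follows from iterated splitting; note that Cheeger--Colding postdates Anderson--Cheeger, so it cannot be ``the heart of the Anderson--Cheeger contribution''), the desired contradiction requires that the \(C^{1,\alpha}\) norms of the rescaled metrics at unit scale tend to zero, i.e.\ a lower-semicontinuity statement for the harmonic radius along the sequence. With two-sided Ricci bounds this is precisely where elliptic regularity for the equation satisfied by \(g\) in harmonic coordinates enters: \(\Ric\to 0\) in \(L^\infty\) upgrades \(C^{1,\alpha'}\)-convergence to convergence with small \(C^{1,\alpha}\)/\(W^{2,p}\) norm at a definite scale. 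With only \(\Ric\geq\lambda\) the right-hand side has no upper control, no such upgrade is available, and \(C^{1,\alpha'}\)-convergence to a flat limit is perfectly compatible with the scale-one \(C^{1,\alpha}\) norm staying at the threshold value; no contradiction results. Indeed one should not expect a uniform \(C^{1,\alpha}\) harmonic radius in this class at all: curvature is free to concentrate from above (for instance along a hypersurface, producing limit metrics that are Lipschitz but not \(C^1\)) while \(\Ric\) stays bounded below and \(\inj\geq i_0\).

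What Anderson--Cheeger actually do involves no blow-up. Using the Ricci lower bound they control the Laplacian of distance functions from above by comparison, and — this is where \(\inj\geq i_0\) enters — from below, by extending minimal geodesics beyond the base point and comparing with the distance function from the far endpoint; they then take harmonic functions with these almost-linear boundary data as coordinates and derive integral (Bochner-type) estimates yielding uniform \(W^{1,p}\), hence \(C^{0,\alpha}\), bounds for the metric components on balls of a definite size. Your endgame (uniform charts of definite size, the diameter bound, a finite cover, Arzel\`a--Ascoli and a diagonal argument) is fine, and the initial appeal to Gromov precompactness is harmless though not needed; the missing piece is a correct proof of a uniform \(C^{0,\alpha}\)/\(W^{1,p}\) harmonic radius estimate, for which the blow-up sketch must be replaced by the distance-function construction above (or else supplemented by semicontinuity and regularity inputs that a one-sided Ricci bound simply does not provide).
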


For the application of the above compactness results in our situation we need the following lemma.

\begin{lemma}
\label{sec:preliminaries-3}
  Let \(\lambda_1,i_0>0\) and \(n\in \mathbb{N}\). Then there is a constant \(\lambda_0(\lambda_1,i_0,n)\) such that for every \(n\)-dimensional complete  Riemannian manifold \(M\) with \(\|\nabla \Ric_M\|\leq \lambda_1\) and \(\conj M\geq i_0\) one has \(\Ric_M\leq \lambda_0\).
\end{lemma}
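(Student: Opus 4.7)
The plan is to combine a Myers-type index form argument with the gradient bound on $\Ric$ in order to propagate a single pointwise lower bound on $\Ric$ along a geodesic. Concretely, suppose some unit vector $v \in T_pM$ satisfies $\Ric(v,v) = C$; the goal is to bound $C$ in terms of $n$, $\lambda_1$ and $i_0$.

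Let $\gamma$ be the unit speed geodesic with $\gamma'(0) = v$. Since $\gamma'$ is parallel along $\gamma$, one has $\frac{d}{dt}\Ric(\gamma'(t),\gamma'(t)) = (\nabla_{\gamma'}\Ric)(\gamma'(t),\gamma'(t))$, and the hypothesis $\|\nabla\Ric\| \leq \lambda_1$ (together with $|\gamma'| = 1$) gives
\begin{equation*}
\Ric(\gamma'(t),\gamma'(t)) \geq C - \lambda_1 t \quad \text{for all } t \geq 0.
\end{equation*}
If already $C \leq 2\lambda_1 i_0$, there is nothing to do. Otherwise, $\Ric(\gamma'(t),\gamma'(t)) \geq C/2$ on the whole interval $[0, i_0]$, which is contained within the conjugate radius by hypothesis.

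Now I would run the standard index form computation on $[0, i_0]$. Choose parallel orthonormal vector fields $E_1, \dots, E_{n-1}$ along $\gamma$ perpendicular to $\gamma'$, and set $X_i(t) = \sin(\pi t / i_0)\, E_i(t)$. Since $i_0 \leq \conj M$, the index form is positive semidefinite on vector fields vanishing at the endpoints, so summing the inequalities $I(X_i, X_i) \geq 0$ gives
\begin{equation*}
(n-1)\frac{\pi^2}{i_0^2}\int_0^{i_0}\cos^2\!\frac{\pi t}{i_0}\,dt \;\geq\; \int_0^{i_0} \Ric(\gamma'(t),\gamma'(t))\, \sin^2\!\frac{\pi t}{i_0}\, dt \;\geq\; \frac{C}{2}\int_0^{i_0}\sin^2\!\frac{\pi t}{i_0}\,dt.
\end{equation*}
Since the two trigonometric integrals are equal, this yields $C \leq 2(n-1)\pi^2/i_0^2$. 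Combining the two cases, one obtains the desired bound $\Ric_M \leq \lambda_0$ with $\lambda_0 = \max\bigl(2\lambda_1 i_0,\; 2(n-1)\pi^2/i_0^2\bigr)$.

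The only delicate point is the case split: the index form argument alone only limits the \emph{average} of $\Ric(\gamma', \gamma')$ over $[0, i_0]$, so one needs the gradient bound to guarantee that this average is comparable to the pointwise value $C$ at $t = 0$. Ensuring the averaged Ricci stays bounded below by $C/2$ is precisely what forces the dichotomy ``$C$ small'' vs.\ ``$C$ remains $\geq C/2$ on $[0, i_0]$'', and that is the main thing to get right in the proof.
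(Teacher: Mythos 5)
Your proof is correct and follows essentially the same route as the paper: use the bound $\|\nabla\Ric_M\|\leq\lambda_1$ to propagate the pointwise value $\Ric(\gamma',\gamma')$ along the geodesic, then exploit $\conj M\geq i_0$ via the second variation to cap it. The paper phrases the second step as a contradiction with Myers' conjugate-point estimate (getting the additive constant $\lambda_0=(n-1)\pi^2/i_0^2+\lambda_1 i_0$), while you unpack the same index-form computation directly and handle the averaging issue with your case split, arriving at $\lambda_0=\max\bigl(2\lambda_1 i_0,\,2(n-1)\pi^2/i_0^2\bigr)$; both constants serve equally well.
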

\begin{proof}
  Assume that there are \(x\in M\) and \(Y\in T_xM\) with \(\|Y\|=1\) and \(\Ric(Y,Y)> (n-1)\frac{\pi^2}{i_0^2} + \lambda_1 i_0=\lambda_0\).
  Let \(\gamma:[0,i_0]\rightarrow M\) be the geodesic with \(\gamma(0)=x\) and \(\gamma'(0)=Y\). Then it follows from the bound on the covariant derivative of the Ricci-tensor and the mean value theorem, that, for all \(t\in [0,i_0]\), \(\Ric(\gamma'(t),\gamma'(t))> (n-1)\frac{\pi^2}{i_0^2}\).
  Therefore it follows as in the proof of Myer's diameter  estimate \cite[p. 171]{MR2243772} that the first conjugate point of \(x\) along \(\gamma\) has distance less than \(i_0\) to \(x\).
  This contradicts the assumption that \(\conj M \geq i_0\).
\end{proof}

\section{Proof of the main result}
\label{sec:proof}

Now we can prove our main theorem.

\ 

\begin{theorem} % \ref{sec:test}[]
\label{sec:test}
 For all $n \in\mathbb{N}$, $C,\lambda_0,\lambda_1,i_0>0$ and $0\leq \beta < 1$ there exists 
$\epsilon=\epsilon(n,C,\lambda_0,\lambda_1,i_0,\beta)>0$ such that every closed \(n\)-dimensional  Riemannian manifold $M$ with
  \begin{align*}
    \int_M\scal_M&\geq n(n-1)\vol M& \conj M& \geq \pi-\epsilon\\
    \Ric_M&\geq -\lambda_0& \| \nabla\Ric_M\|&\leq \lambda_1\\
    \vol M &\leq \frac{C}{(\pi- \conj M)^\beta}& \inj M&\geq i_0
  \end{align*}
is diffeomorphic to a spherical space form.
\end{theorem}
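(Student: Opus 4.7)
The plan is to argue by contradiction: assume a sequence of closed Riemannian $n$-manifolds $(M_i,g_i)$ satisfies all the hypotheses with $\epsilon=\epsilon_i\downarrow 0$, yet no $M_i$ is diffeomorphic to a spherical space form. The first step is to upgrade the Ricci hypotheses to two-sided bounds. Setting $a_i:=\conj M_i\geq\pi-\epsilon_i$, for $i$ large we have $a_i\geq \pi/2$, so Lemma \ref{sec:preliminaries-3} produces a uniform upper Ricci bound $\Ric_{M_i}\leq\lambda_0'(n,\lambda_1)$, and together with $\Ric_{M_i}\geq-\lambda_0$ the Ricci curvatures are now bounded on both sides. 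Combined with $\inj M_i\geq i_0$ and the hypothesis on $\|\nabla\Ric_{M_i}\|$, Anderson's Theorem~\ref{sec:preliminaries-1} (applied with $k=1$) then extracts a subsequence of pointed manifolds $(M_i,p_i,g_i)$ that converges in the pointed $C^{2,\alpha}$ topology to a complete limit $(M_\infty,p_\infty,g_\infty)$.

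The second step is to quantify the slack in Green's argument. Running the proof of Theorem~\ref{sec:proof-main-results} on $M_i$ with $a=a_i$ yields
\begin{equation*}
\int_{M_i}\scal_{M_i}\dvol_{M_i}\;\leq\; n(n-1)\frac{\pi^2}{a_i^2}\vol M_i,
\end{equation*}
and combining this with the hypothesis $\int_{M_i}\scal_{M_i}\geq n(n-1)\vol M_i$ and the volume bound $\vol M_i\leq C(\pi-a_i)^{-\beta}$ gives
\begin{equation*}
0\;\leq\;\int_{M_i}\scal_{M_i}\dvol_{M_i}-n(n-1)\vol M_i\;\leq\; n(n-1)\frac{\pi^2-a_i^2}{a_i^2}\vol M_i \;=\;O\bigl(\epsilon_i^{\,1-\beta}\bigr).
\end{equation*}
Because $\beta<1$, this total-scalar-curvature defect tends to zero; this is precisely where both the quantitative volume bound and the exponent condition $\beta<1$ enter the argument.

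The third step is to identify the limit $M_\infty$ as a spherical space form. Since Jacobi fields depend on coefficients that are $C^{0,\alpha}$-convergent, the conjugate radius is continuous under $C^{2,\alpha}$-convergence, so $\conj M_\infty\geq\pi$. The plan is then to pass the integrated Green-type inequality from the proof of Theorem~\ref{sec:proof-main-results} to the limit along a compact exhaustion of $M_\infty$, using the defect estimate above to control the contributions coming from the complement of $F_i(K)$ in $M_i$, so that in the limit equality holds in Green's inequality. The equality case then forces $M_\infty$ to have constant sectional curvature $1$; in particular $\Ric_{M_\infty}=(n-1)g_\infty$, and Myers' theorem renders $M_\infty$ compact with $\diam M_\infty\leq\pi$, hence isometric to a spherical space form. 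The pointed $C^{2,\alpha}$-convergence then upgrades to genuine $C^{2,\alpha}$-convergence, so $M_i$ is diffeomorphic to $M_\infty$ for all sufficiently large $i$, yielding the desired contradiction.

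The main obstacle I anticipate is the passage from the closed $M_i$'s to the a priori possibly noncompact pointed limit $M_\infty$, i.e.\ passing the integrated Green equality to the limit without losing mass at infinity. This is exactly the step where the volume hypothesis $\vol M_i\leq C(\pi-\conj M_i)^{-\beta}$ with $\beta<1$ does the work, in accordance with the authors' own remark that this hypothesis is required precisely to ensure convergence of the relevant unit sphere bundle integrals. The cleanest route seems to combine the $O(\epsilon_i^{\,1-\beta})$ defect estimate with $C^{0,\alpha}$-convergence of $\scal_{M_i}$ on compacts and the uniform two-sided Ricci bound to conclude simultaneously that $M_\infty$ has finite volume (hence, with $\inj\geq i_0$ and bounded Ricci, is compact) and that Green's inequality saturates on it.
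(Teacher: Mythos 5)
Your proposal follows the paper's own proof in all essentials: argue by contradiction along a sequence with \(\conj M_i\to\pi\), use Lemma~\ref{sec:preliminaries-3} to upgrade to two-sided Ricci bounds, invoke Anderson's Theorem~\ref{sec:preliminaries-1} to extract a pointed \(C^{2,\alpha}\)-limit \((M_\infty,p,g_\infty)\), observe that the Green-type defect is \(O(\epsilon_i^{1-\beta})\to 0\) thanks to the volume hypothesis and \(\beta<1\), and conclude constant curvature \(1\) for \(g_\infty\) from the equality case of the index-form argument, whence \(M_\infty\) is a spherical space form and \(M_i\cong M_\infty\) for large \(i\). That main line is correct and is exactly what the paper does.

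The one place where your proposed execution would fail is the resolution you sketch for the "mass at infinity" issue. You cannot conclude that \(M_\infty\) has finite volume from the defect estimate: when \(\beta>0\) the hypotheses allow \(\vol M_i\sim C(\pi-a_i)^{-\beta}\to\infty\) while the defect \(\int_{M_i}\scal_{M_i}-n(n-1)\vol M_i\) still tends to \(0\), since the defect controls a difference of two possibly divergent quantities, not the volume itself; so compactness of \(M_\infty\) cannot be established before the curvature identification. Nor is there anything to "control" on the complement of \(F_i(K)\): the correct mechanism (and the paper's) is that for every unit-speed geodesic of length \(a_i=\conj M_i\) the quantity \(\int_0^{a_i}\bigl(\tfrac{\pi^2}{a_i^2}(n-1)-\Ric_{M_i}(\gamma_i',\gamma_i')\bigr)\sin^2\tfrac{\pi t}{a_i}\,dt\) is nonnegative by the index-form argument, so the integral over \(SM_i|_{B_R(p_i)}\) is bounded above by the full sphere-bundle integral, which is \(O(\epsilon_i^{1-\beta})\to 0\); the outside contributions are simply discarded by nonnegativity. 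Then \(C^{2,\alpha}\)-convergence (hence \(C^1\)-convergence of geodesics, uniform convergence of Ricci, and convergence of the canonical measures over \(B_R(p)\)) forces \(\int_0^{\pi}\bigl((n-1)-\Ric_{M_\infty}(\gamma_\infty',\gamma_\infty')\bigr)\sin^2 t\,dt=0\) for every geodesic issuing from \(B_R(p)\), so \(\sin t\,V(t)\) is a Jacobi field and \(g_\infty\) has constant curvature \(1\) on \(B_R(p)\); letting \(R\to\infty\) gives constant curvature \(1\) everywhere, and only at that point does compactness of \(M_\infty\) (via Myers, as you note) and the diffeomorphism statement follow. With this correction your argument coincides with the paper's proof.
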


\

\begin{proof}
  Assume that the theorem is not true.
  Then, for some $n \in\mathbb{N}$, $C,\lambda_0,\lambda_1,i_0>0$ and $0\leq \beta < 1$,
	there is a sequence of closed \(n\)-dimensional Riemannian manifolds \((M_i,g_i)\), such that
 \begin{align*}
   \int_{M_i}\scal_{M_i}\dvol_{M_i}&\geq n(n-1)\vol M_i& \lim_{i\to \infty}\conj M_i& = \pi\\
    \Ric_{M_i}&\geq -\lambda_0& \| \nabla\Ric_{M_i}\|&\leq \lambda_1\\
    \vol M_i &\leq \frac{C}{(\pi-\conj M_i)^\beta}& \inj M_i&\geq i_0   
  \end{align*}
and such that none of the \(M_i\) are diffeomorphic to spherical space forms. 
  Note that by Lemma~\ref{sec:preliminaries-3}, we may assume that \(\|\Ric_{M_i}\|\leq \lambda_0\).

By Theorem \ref{sec:preliminaries-1}, after choosing points $p_i\in M_i$ 
and passing to a subsequence if necessary, one may assume that the 
manifolds \((M_i,p_i)\) converge in the pointed \(C^{2,\alpha}\) topology to a pointed smooth manifold 
\((M_\infty,p,g_\infty)\) equipped with a \(C^{2,\alpha}\) metric $g_\infty$.

Let \(R>0\) and \(B_R(p)\subset M_{\infty}\) the ball of radius \(R\) in \(M_\infty\).
Then we may assume that the metrics \(g_i\) are defined on \(B_{R+\pi}(p)\) and that they converge in \(C^{2,\alpha}\) topology to \(g_\infty\).

Let \(x\in B_R(p)\) and \(v\in T_xM_{\infty}-\{0\}\).
Then the unit speed geodesics \(\gamma_i\) with respect to the metrics \(g_i\) with initial values \(\gamma_i(0)=x\) and \(\gamma_i'(0)=v/\sqrt{g_i(v,v)}\) and length less than or equal to \(\pi\) converge uniformly in \(C^1\) topology to a geodesic \(\gamma_\infty\) for the metric \(g_{\infty}\).

Moreover, the Ricci curvatures of \(g_i\) converge uniformly to the Ricci curvatures of the limit metric \(g_{\infty}\).
Hence, \(\Ric_{M_i}(\gamma_i'(t),\gamma_i'(t))\), \(t\in[0,\pi]\), converges uniformly to \(\Ric_{M_\infty}(\gamma_\infty'(t),\gamma_\infty'(t))\).

Let \(a_i=\conj M_i\). 
Then \(\pi=\lim_{i\to\infty} a_i= a_{\infty}\) \cite{MR0417977}, and
by the above discussion we have:
\begin{equation*}
  \begin{split}
  \lim_{i\to \infty}\int_{0}^{a_i}\left(\frac{\pi^2}{a_i^2}(n-1)-\Ric_{M_i}(\gamma_i'(t),\gamma_i'(t))\right)\sin^2 \frac{\pi}{a_i}t\;\;dt=\\\int_{0}^\pi\left((n-1)-\Ric_{M_\infty}(\gamma_\infty'(t),\gamma_\infty'(t))\right)\sin^2 t\;\;dt \geq 0    
  \end{split}
\end{equation*}

Moreover, as in the proof of Green's theorem one now computes:
\begin{equation*}
  \begin{split}
  0&\leq\int_{SM_i}\int_{0}^{a_i}\left(\frac{\pi^2}{a_i^2}(n-1)-\Ric_{M_i}(\zeta_i^t(u),\zeta^t_i(u))\right)\sin^2 \frac{\pi}{a_i}t\;\;dt\;d\mu_i\\
  &=\left(\vol(M_i)\frac{\pi^2}{a_i^2}(n-1)-\frac{1}{n}\int_{M_i}\scal_{M_i}\dvol_{M_i}\right)\vol(S^{n-1})\int_0^{a_i}\sin^2\frac{\pi}{a_i}t\;\;dt\\
  &\leq \vol (M_i)\frac{\pi^2-a_i^2}{a_i^2}(n-1)\vol(S^{n-1})\int_0^{a_i}\sin^2\frac{\pi}{a_i}t\;\;dt\\
  &\leq \frac{C}{(\pi-a_i)^\beta}\frac{\pi^2-a_i^2}{a_i^2}(n-1)\vol(S^{n-1})\int_0^{a_i}\sin^2\frac{\pi}{a_i}t\;\;dt\rightarrow 0
  \end{split}  
\end{equation*}
as \(i\to \infty\).
Here \(\mu_i\) denotes the canonical measure induced by \(g_i\) on the unit sphere bundle over \(M_i\) associated to \(TM_i\),
 and \(\vol(S^{n-1})\) stands for the volume of the \((n-1)\)-dimensional unit sphere.

Since \(\mu_i\) converges on \(SM_i|_{B_R(p)}\) to \(\mu_\infty\), it follows that
\begin{equation*}
 A= \int_{0}^\pi\left((n-1)-\Ric_{M_\infty}(\gamma_\infty'(t),\gamma_\infty'(t))\right)\sin^2 t\;\;dt=0.
\end{equation*}

Let \(V\) be a parallel vector field along \(\gamma_{\infty}\) with \(V\perp \gamma'\). Let \(W(t)=V(t)\sin t\).
Then the index form of \(W\) is bounded above by \(A\). Therefore \(W\) is a Jacobi field.
Hence it follows that \(g_\infty\) has constant sectional curvature \(1\) on \(B_R(p)\).
Since \(R\) can be chosen to be arbitrarily large, it follows that \((M_\infty,g_\infty)\) has constant curvature \(1\).
Therefore \(M_\infty\) is diffeomorphic to a spherical space form.

This is a contradiction and, hence, the proof is complete.
\end{proof}

\

We now give a proof of %injectivity radius sphere theorem 
Corollary~\ref{sec:introduction-1}:

\

\begin{cor}
\label{sec:test2}
  For all $n \in\mathbb{N}$ and $\lambda_1,d>0$ there exists \(\epsilon=\epsilon(n,\lambda_1,d)>0\) such that every closed \(n\)-dimensional  Riemannian manifold \(M\) with
  \begin{align*}
    \int_M\scal_M\dvol_M&\geq n(n-1)\vol M & \inj M&\geq \pi -\epsilon\\
    \diam M &\leq d & \| \nabla\Ric_M\|&\leq \lambda_1
  \end{align*}
is  diffeomorphic to the standard $n$-sphere.  
\end{cor}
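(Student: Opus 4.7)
The plan is to deduce Corollary \ref{sec:test2} from Theorem \ref{sec:test} (the main theorem) and then to upgrade the conclusion from ``spherical space form'' to ``standard sphere $S^n$''. Since $\conj M \geq \inj M$ always holds, the conjugate radius hypothesis of Theorem \ref{sec:test} follows from $\inj M \geq \pi - \epsilon$; for $\epsilon < \pi/2$ one also has $\inj M \geq \pi/2$, so we may set $i_0 = \pi/2$. The bound $\|\nabla \Ric_M\| \leq \lambda_1$ is given, and Lemma \ref{sec:preliminaries-3} applied with $i_0 = \pi/2$ yields an upper Ricci bound $\Ric_M \leq \lambda_0'(n,\lambda_1)$.

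The nontrivial step is to produce a lower Ricci bound $\Ric_M \geq -\lambda_0$ and a uniform volume bound $\vol M \leq C$ (so that the remaining hypothesis of Theorem \ref{sec:test} holds with $\beta = 0$). The strategy for the lower Ricci bound is: if $\Ric_M$ has an eigenvalue $\leq -K$ at some $x_0 \in M$, then $\|\nabla \Ric_M\| \leq \lambda_1$ forces the same eigenvalue, along the parallel-transported direction, to stay $\leq -K/2$ throughout a ball $B_r(x_0)$ with $r = K/(2\lambda_1)$. Combined with $\Ric_M \leq \lambda_0'$, this yields $\scal_M \leq -K/2 + (n-1)\lambda_0'$ on that ball. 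Croke's inequality (applicable because $\inj M \geq \pi/2$) gives $\vol B_r(x_0) \geq c_n r^n$; inserting these into $\int_M \scal_M \geq n(n-1)\vol M$ then bounds $K$ in terms of $\vol M$. The volume bound, in turn, would follow from Bishop--Gromov applied to $\diam M \leq d$ together with any uniform lower Ricci bound.

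The main obstacle is the circularity between these two bounds: deriving the Ricci lower bound through the argument above requires a volume bound, while Bishop--Gromov requires a Ricci lower bound. I would resolve this via a contradiction argument along a sequence $(M_i)$ of hypothetical counterexamples with $\epsilon_i \to 0$, using the integral scalar curvature hypothesis and the $\|\nabla \Ric\|$ bound to show that volumes and most-negative Ricci eigenvalues cannot simultaneously blow up; this produces uniform constants $\lambda_0$ and $C$. With these in hand, Theorem \ref{sec:test} applies and yields $M$ diffeomorphic to a spherical space form.

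Finally, to upgrade ``spherical space form'' to $S^n$: in the proof of Theorem \ref{sec:test}, along such a sequence the $C^{2,\alpha}$-limit $(M_\infty, g_\infty)$ has constant sectional curvature $1$. Upper semi-continuity of the injectivity radius in the $C^{1,\alpha}$ topology (noted in the paper's remarks) gives $\inj M_\infty \geq \limsup_i \inj M_i \geq \pi$. Among constant curvature $1$ manifolds, only $S^n$ itself has injectivity radius equal to $\pi$; hence $M_\infty \cong S^n$, and $M_i \cong M_\infty$ for large $i$, contradicting the assumption that none of the $M_i$ is diffeomorphic to $S^n$.
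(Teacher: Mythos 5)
The reduction to Theorem~\ref{sec:test} (with $i_0=\pi/2$, the upper Ricci bound from Lemma~\ref{sec:preliminaries-3}, and $\conj M\geq\inj M$) and your final upgrade from ``spherical space form'' to $S^n$ via the injectivity radius of the limit are fine and agree with the paper. The genuine gap is in the middle step: you never actually establish the uniform lower Ricci bound $\Ric_M\geq-\lambda_0$ and the uniform volume bound $\vol M\leq C$ that Theorem~\ref{sec:test} requires. You correctly notice that your mechanism is circular (the Croke-ball argument bounds the most negative Ricci eigenvalue $K$ only in terms of $\vol M$, while your only route to bounding $\vol M$ is Bishop--Gromov, which needs the Ricci lower bound), but you then defer the resolution to an unspecified ``contradiction argument along a sequence''. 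As sketched this does not close: since the ball radius is capped at the injectivity radius, your estimate gives at best $K_i\leq C_1+C_2\vol M_i$ along a sequence of counterexamples, whereas Bishop--Gromov with $\diam\leq d$ and $\Ric\geq -K_i$ only gives $\vol M_i\leq C(n,d)\,e^{(n-1)\sqrt{K_i}\,d}$; these two inequalities are perfectly consistent with $K_i$ and $\vol M_i$ blowing up simultaneously, so no contradiction is produced and the hypotheses of Theorem~\ref{sec:test} remain unverified.

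The circularity is avoidable, and the paper's proof does so by a pointwise argument that needs no volume information at all: since $\scal_M=\trace\Ric_M$, the hypothesis $\|\nabla\Ric_M\|\leq\lambda_1$ bounds $|d\,\scal_M|$ by $c(n)\lambda_1$; the integral hypothesis forces a point $x\in M$ with $\scal_M(x)\geq n(n-1)$; and the mean value theorem along minimizing geodesics together with $\diam M\leq d$ then gives $\scal_M\geq n(n-1)-c(n)\lambda_1 d$ on all of $M$. Combined with the upper Ricci bound $\lambda_0'(n,\lambda_1)$ from Lemma~\ref{sec:preliminaries-3} (for a unit vector $Y_1$, $\Ric(Y_1,Y_1)=\scal_M-\sum_{j\geq2}\Ric(Y_j,Y_j)\geq\scal_M-(n-1)\lambda_0'$), this yields a lower Ricci bound depending only on $n,\lambda_1,d$; Bishop--Gromov with the diameter bound then gives the volume bound, and Theorem~\ref{sec:test} applies with $\beta=0$, $C=v$. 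If you replace your Croke-ball step by this global oscillation bound on the scalar curvature, the rest of your argument goes through as written.
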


\

\begin{proof}
  At first we show that we have a two-sided Ricci curvature bound for the considered manifolds.
  For this we may assume that \(\inj M\geq \frac{\pi}{2}\).
  By Lemma~\ref{sec:preliminaries-3}, we have an upper bound for the Ricci curvature on \(M\).
  The bound on the covariant derivative of the Ricci curvature, implies that the norm of the derivative of the scalar curvature on \(M\) is bounded by a constant which only depends on \(\lambda_1\).
  Moreover, by the lower bound on the integral of the scalar curvature, we get that there is a point \(x\in M\) with \(\scal(x)\geq n(n-1)\).
  Therefore a global lower bound on the scalar curvature follows from the mean value theorem and the upper diameter bound.
  This bound only depends on \(\lambda_1\) and \(d\).
  From the upper bound on the Ricci curvature, it now follows that there is also a lower bound on the Ricci curvature.

  This lower bound on Ricci curvature and the Bishop--Gromov Volume Comparison Theorem imply 
	that the upper diameter bound of $M$ also yields an upper volume bound \(v\) for \(M\).
  Therefore we can take \(C=v\) and \(\beta=0\) and apply the theorem.
  This shows that \(M\) is diffeomorphic to an $n$-dimensional spherical space form with injectivity radius \(\pi\).
  However, the only such space form is the standard $n$-sphere.
\end{proof}

\

We also have the following corollary.

\

\begin{cor}
  For all $n \in\mathbb{N}$ and $\lambda_1,v>0$ there exists \(\epsilon=\epsilon(n,\lambda_1,v)>0\) such that every closed \(n\)-dimensional  Riemannian manifold \(M\) with
  \begin{align*}
    \int_M\scal_M\dvol_M&\geq n(n-1)\vol M & \inj M&\geq \pi -\epsilon\\
    \vol M &\leq v & \| \nabla\Ric_M\|&\leq \lambda_1
  \end{align*}
is  diffeomorphic to the standard $n$-sphere.  
\end{cor}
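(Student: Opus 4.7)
The plan is to reduce this corollary to Corollary \ref{sec:test2} by producing a diameter bound $d=d(n,v)$ depending only on the dimension and the volume bound $v$, and then invoking Corollary \ref{sec:test2} with this $d$ and the given $\lambda_1$.

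To obtain such a diameter bound, I would first restrict to $\epsilon\leq\pi/2$, so that $\inj M\geq\pi/2$. The key input is then Croke's isoperimetric inequality, which asserts that on any Riemannian $n$-manifold one has
\begin{equation*}
\vol(B_r(p))\geq c_n\, r^n\qquad \text{for all } r\leq \tfrac{1}{2}\inj_p M,
\end{equation*}
with a constant $c_n>0$ depending only on $n$ and, crucially, requiring no curvature hypothesis. In our setting this yields a uniform lower bound on the volume of every metric ball of radius $\pi/8$ in $M$.

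A standard packing argument now converts the hypothesis $\vol M\leq v$ into an upper bound on $\diam M$: along a minimizing geodesic of length $\diam M$ one marks off roughly $4\diam M/\pi$ points at mutual distance $\pi/4$, whose $\pi/8$-balls are then pairwise disjoint and each have volume at least $c_n(\pi/8)^n$. Adding their volumes and comparing with $v$ yields $\diam M\leq d(n,v)$ for an explicit $d(n,v)$.

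With this diameter bound in hand, I would then apply Corollary \ref{sec:test2} to the parameters $(n,\lambda_1,d(n,v))$, which produces an $\epsilon'=\epsilon'(n,\lambda_1,v)>0$ guaranteeing the diffeomorphism conclusion whenever $\inj M\geq\pi-\epsilon'$. Setting $\epsilon:=\min(\pi/2,\epsilon')$ closes the proof. The main conceptual point, and the reason this reduction is needed in the first place, is that Theorem \ref{sec:test} and Corollary \ref{sec:test2} ultimately rely on a lower Ricci bound, and in the diameter-bounded setting this lower bound is squeezed out of the integral scalar curvature hypothesis using the gradient bound $\|\nabla\Ric_M\|\leq\lambda_1$ together with the diameter bound; replacing the diameter bound by a volume bound therefore forces one to first recover a diameter bound, for which Croke's purely local lower estimate on ball volumes, needing only the injectivity radius hypothesis, is precisely the right tool.
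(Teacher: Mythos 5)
Your proposal is correct and follows essentially the same route as the paper: both reduce to Corollary~\ref{sec:test2} by extracting a diameter bound $d(n,v)$ from the volume bound, using Croke's curvature-free lower bound on volumes of small balls (the paper cites this as Proposition~14 of Croke's paper) together with a packing argument along a diameter-realizing minimizing geodesic. Your version with $\pi/8$-balls centered at points spaced $\pi/4$ apart is, if anything, a slightly more carefully stated form of the same packing step.
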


\

\begin{proof}
  In view of Corollary~\ref{sec:test2}, it is sufficient to show that the diameter of all the manifolds, which satisfy the bounds in the statement of the corollary, is bounded by a constant \(d(v,n)\).
  We may assume that \(\inj M\geq \pi/2\).
  Then it follows from \cite[Proposition 14]{MR608287} that the volume of geodesic balls of radius \(\pi/4\) is bounded from below by a constant \(C(n)>0\)  which only depends on the dimension of the manifold.
Let \(\gamma:[0,d]\rightarrow M\) be a minimizing geodesic between two points in \(M\) which realize the diameter of \(M\) parametrized by arc-length.
Then \(\gamma([0,d])\) can be covered by \([2d/\pi]\) disjoint geodesic balls of radius \(\pi/4\).
Therefore we have
\begin{equation*}
  \vol M \geq [2d/\pi] C(n).
\end{equation*}
Hence it follows that the diameter of \(M\) is bounded from above by some \(d(v,n)\).
\end{proof}

\section{Applications}
\label{sec:app}

In this section we apply the results from the previous section to the study of manifolds with positive Ricci 
and positive sectional curvature as well as to Einstein manifolds.

\

Our first result below directly implies the sphere theorem Corollary~\ref{sec:introduction-4} in the introduction:

\

\begin{theorem}
\label{sec:test3}
For all $n, k \in\mathbb{N}$ there exists \(\epsilon=\epsilon(n,k)>0\) 
such that every closed \(n\)-dimensional  Einstein manifold \(M\) 
with Einstein constant \(n-1\), order of the fundamental group $ |\pi_1(M)| \leq k $ 
and conjugate radius $\conj M \geq \pi-\epsilon$
is  diffeomorphic to an $n$-dimensional spherical space form.
\end{theorem}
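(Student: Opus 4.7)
The plan is to deduce Theorem~\ref{sec:test3} from the main result, Theorem~\ref{sec:test}, applied to $M$. Since $M$ is Einstein with $\Ric_M = (n-1)g$, most of the required hypotheses are satisfied for free: $\scal_M \equiv n(n-1)$, so the integral inequality $\int_M \scal_M \dvol_M \geq n(n-1)\vol M$ holds (with equality); $\Ric_M \geq n-1$, so one may take $\lambda_0 = 1$; $\nabla\Ric_M \equiv 0$, so any $\lambda_1 > 0$ is admissible; and Bishop--Gromov gives $\vol M \leq \vol S^n(1)$, so the volume bound holds with $\beta = 0$ and $C = \vol S^n(1)$. The only non-trivial hypothesis is the uniform lower bound $\inj M \geq i_0(n,k) > 0$, and this is the heart of the matter.

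To establish this bound I would argue by contradiction. Suppose a sequence of Einstein manifolds $M_j$ satisfies the hypotheses with $\conj M_j \to \pi$ while $\inj M_j \to 0$. Pass to the universal covers $\tilde M_j$: each is closed (by Myers), simply connected, Einstein with $\Ric = (n-1)\tilde g_j$, has $\conj \tilde M_j = \conj M_j \to \pi$ (the conjugate radius being preserved by Riemannian coverings), and has $\vol \tilde M_j \leq k\,\vol S^n(1)$. Klingenberg's lemma gives $\inj \tilde M_j = \min(\conj \tilde M_j,\tfrac{1}{2}\ell_{\mathrm{sc}}(\tilde M_j))$ in the simply connected case, where $\ell_{\mathrm{sc}}$ denotes the length of the shortest closed geodesic, while the drop from $\inj \tilde M_j$ to $\inj M_j$ is accounted for by the displacement of the at most $k-1$ non-trivial deck transformations. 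I would then apply a compactness theorem for Einstein manifolds to extract a subsequential $C^\infty$-limit $(\tilde M_\infty,g_\infty)$, which is closed Einstein with $\Ric = (n-1)g_\infty$ and has $\conj \tilde M_\infty = \pi$ by upper semi-continuity of the conjugate radius combined with Myers. Green's rigidity (Theorem~\ref{sec:proof-main-results}) would then identify $\tilde M_\infty$ isometrically with $S^n(1)$, whose shortest closed geodesic has length $2\pi$; this contradicts $\ell_{\mathrm{sc}}(\tilde M_j) \to 0$ and, together with the bound on deck displacements, forces $\inj M_j \geq i_0$ for large $j$.

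The main obstacle in carrying out this plan is the application of the compactness theorem. As stated, Theorem~\ref{sec:preliminaries-1} requires the very injectivity-radius lower bound we are trying to establish. The standard way around this for Einstein metrics is to invoke Anderson's Einstein-specific compactness, in which the hypothesis $\inj \geq i_0$ is replaced by a uniform positive lower bound on the volume. Since no such volume lower bound holds universally for simply connected closed Einstein $n$-manifolds with $\Ric = (n-1)g$, it must be extracted from the conjugate-radius hypothesis itself. I would attempt this by observing that $\exp_{\tilde p}$ is a local diffeomorphism on $B_{\pi-\epsilon}(0) \subset T_{\tilde p}\tilde M_j$ for every $\tilde p$, and combining this with a Bishop-type comparison under $\Ric \geq n-1$ to conclude $\vol \tilde M_j \geq v(n) > 0$ for $\epsilon$ sufficiently small. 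Executing this volume lower bound rigorously is the technically delicate step; once it is in place the compactness-plus-rigidity reasoning closes, and Theorem~\ref{sec:test} then furnishes the sought diffeomorphism of $M$ with a spherical space form.
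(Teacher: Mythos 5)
Your reduction to Theorem~\ref{sec:test} is exactly the one the paper uses: the Einstein condition gives \(\scal_M\equiv n(n-1)\) (hence the integral hypothesis), \(\Ric_M\geq n-1\), \(\nabla\Ric_M\equiv 0\) (so any \(\lambda_1\) works), and Bishop--Gromov gives \(\vol M\leq \vol S^n\), so one may take \(\beta=0\), \(C=\vol S^n\). As you say, the whole content is the uniform bound \(\inj M\geq i_0(n,k)>0\). The paper disposes of this step by quoting an injectivity radius estimate from the literature ([MR1692012, Theorem~3.1]): for the universal cover \(\tilde M\) there is \(i_0>0\) depending only on \(n\) and the lower bounds for \(\Ric_{\tilde M}\) and \(\conj\tilde M\) with \(\inj\tilde M\geq i_0\), and then \(\inj M\geq \inj\tilde M/k\) since the cover has at most \(k\) sheets. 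So your overall architecture coincides with the paper's; the divergence is that you try to prove this injectivity radius bound from scratch, and that is where the proposal breaks down.

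Your substitute argument has two genuine gaps. First, the compactness theorem you invoke does not exist in the form you need: for Einstein metrics, replacing the injectivity radius hypothesis by a volume lower bound (plus the diameter bound from Myers) yields at best \emph{orbifold} compactness (Anderson, Bando--Kasue--Nakajima), with curvature allowed to concentrate at finitely many points, and in dimensions \(\geq 5\) even that requires an extra \(L^{n/2}\) curvature bound; genuine \(C^\infty\) subconvergence to a closed Einstein manifold is false in general (degenerating K\"ahler--Einstein metrics on del Pezzo surfaces, with positive Einstein constant, bounded diameter and non-collapsed volume, converge to orbifolds). Excluding such curvature concentration is exactly the local (harmonic or injectivity radius) control you set out to prove, so as written the argument is circular unless you show that the hypothesis \(\conj\geq\pi-\epsilon\) forbids bubbling, which you do not. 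Second, the uniform volume lower bound for \(\tilde M_j\) is not delivered by your sketch: the fact that \(\exp_p\) is an immersion on \(B_{\pi-\epsilon}(0)\) together with Bishop-type comparison bounds the volume of the \emph{pulled-back} ball, not of \(\tilde M_j\), since \(\exp_p\) may be highly non-injective inside the conjugate radius; bounding that multiplicity is precisely the conjugate-radius-versus-injectivity-radius problem you are trying to solve, and simple connectivity must enter in an essential way (round lens spaces have \(\Ric=n-1\), \(\conj=\pi\) and arbitrarily small volume), which your outline never uses. In effect, the step you defer as ``technically delicate'' is equivalent to the injectivity radius estimate the paper imports; you need either to cite such a result, as the paper does, or to supply an actual proof of it --- the compactness route as sketched does not close.
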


\

\begin{proof}
  Notice first that \(\int_M\scal_M\dvol_M \geq n(n-1)\vol M\) and that \(\vol M \leq \vol S^n\).
  Moreover, we may assume that \(\conj M \geq \frac{\pi}{2}\).

  Let \(\tilde{M}\) be the Riemannian universal covering of \(M\).
  By \cite[Theorem 3.1]{MR1692012}, there is an \(i_0>0\) which does only depend on the dimension
	and the lower bounds for the Ricci curvatures and the conjugate radius of  \(\tilde{M}\),
	such that \(\inj \tilde{M}\geq i_0\).
  Since \(\tilde{M}\rightarrow M\) is a finite $k$-sheeted covering, 
	we have \(\inj M\geq \frac{1}{k}\inj \tilde{M}\geq \frac{i_0}{k}\).

  Since \(\|\nabla \Ric_M\|=0\), we can now apply Theorem~\ref{sec:introduction}, which then gives the desired conclusion.
\end{proof}

\

If we do not assume that the Ricci curvature is constant, we obtain Theorem~\ref{sec:introduction-3}
from the introduction.

\

\begin{theorem}
\label{sec:test5}
 For all $n \in\mathbb{N}$ and $k >0$ there exists  \(\epsilon=\epsilon(n,k)>0\) such that every closed \(n\)-dimensional  Riemannian manifold \(M\) with
  \begin{align*}
    |\pi_1(M)|&\leq k\\
    \Ric_M&\geq n-1\\
    \conj M& \geq \pi-\epsilon
  \end{align*}
is  diffeomorphic to an Einstein manifold with Einstein constant \(n-1\).
\end{theorem}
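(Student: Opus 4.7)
The plan is to mimic the contradiction scheme of Theorem~\ref{sec:test3}, but with the weaker Anderson--Cheeger $C^{0,\alpha}$ precompactness (Theorem~\ref{sec:preliminaries-2}) replacing Anderson's $C^{2,\alpha}$ version, since no bound on $\|\nabla\Ric\|$ is now available. I assume the conclusion fails and pick a sequence $(M_i,g_i)$ of closed $n$-manifolds with $|\pi_1(M_i)|\le k$, $\Ric_{M_i}\ge n-1$ and $\conj M_i\to\pi$, none of which is diffeomorphic to an Einstein manifold with Einstein constant $n-1$.

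Two standard steps set the stage. First, Myers' theorem forces $\diam M_i\le\pi$. Second, exactly as in the proof of Theorem~\ref{sec:test3}, one passes to the Riemannian universal cover $\tilde M_i$ and invokes \cite[Theorem 3.1]{MR1692012} to obtain $\inj\tilde M_i\ge i_0(n)$, which together with $|\pi_1(M_i)|\le k$ yields $\inj M_i\ge i_0/k$. Theorem~\ref{sec:preliminaries-2} then produces a subsequence converging in the $C^{0,\alpha}$ topology to a closed Riemannian manifold $(M_\infty,g_\infty)$ with $g_\infty$ of class $C^{1,\alpha}$, and the uniform diameter bound guarantees that $M_i$ is diffeomorphic to $M_\infty$ for all large $i$. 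It therefore suffices to exhibit a smooth Einstein metric on $M_\infty$ with Einstein constant $n-1$, contradicting the choice of the $M_i$.

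To produce such a metric, I would combine Green's theorem with the Ricci lower bound. Writing $a_i=\conj M_i\to\pi$, Theorem~\ref{sec:proof-main-results} together with $\scal_{M_i}\ge n(n-1)$ squeezes the nonnegative integral $\int_{M_i}(\scal_{M_i}-n(n-1))\dvol_{M_i}$ to $0$. Since the eigenvalues of $\Ric_{M_i}-(n-1)g_i$ are nonnegative with trace $\scal_{M_i}-n(n-1)$, the tensor $\Ric-(n-1)g$ tends to $0$ in $L^1$. Passing to the limit, one expects $g_\infty$ to satisfy the Einstein equation $\Ric(g_\infty)=(n-1)g_\infty$ in a suitable weak sense; elliptic regularity for the Einstein equation in harmonic coordinates then promotes $g_\infty$ to a smooth Einstein metric on $M_\infty$.

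The main obstacle will be this last step: passing from the $L^1$ bound on $\Ric-(n-1)g$ under mere $C^{0,\alpha}$ convergence to a classical pointwise Einstein equation on a smooth limit. The Ricci tensor of a $C^{1,\alpha}$ metric has very low regularity, so one cannot take the limit naively. The cleanest remedy is to precede the compactness argument with a short-time Ricci--DeTurck flow smoothing of each $g_i$, which under the given injectivity radius and Ricci bounds produces metrics with controlled covariant derivatives of curvature while staying $C^{0,\alpha}$-close to $g_i$; alternatively, one can bootstrap in harmonic coordinates directly from the asymptotic Einstein equation. This is precisely the subtle point flagged in the acknowledgments as requiring a revised proof.
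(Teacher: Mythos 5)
Your skeleton coincides with the paper's argument up to the decisive point: Myers' diameter bound, the uniform injectivity radius bound \(\inj M_i\geq i_0/k\) obtained via \cite[Theorem 3.1]{MR1692012} on the universal cover, Anderson--Cheeger \(C^{0,\alpha}\) precompactness, the squeeze \(n(n-1)\leq \frac{1}{\vol M_i}\int_{M_i}\scal_{M_i}\,\dvol_{M_i}\leq n(n-1)\pi^2/a_i^2\) coming from Green's inequality, and the resulting \(L^1\)-smallness of \(\Ric_{g_i}-(n-1)g_i\) are exactly the paper's steps. However, the step you yourself flag as ``the main obstacle'' and do not carry out is precisely the substance of the paper's proof, so as written your argument has a genuine gap at its crux. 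The paper closes it by following Anderson \cite[Proof of Proposition 3.29, Step 3]{MR1240887}: the Anderson--Cheeger theory in fact gives uniform \(C^{0,\alpha}\cap L^{1,p}\) control of the metrics in harmonic coordinates (so the limit is \(C^{0,\alpha}\cap L^{1,p}\), not \(C^{1,\alpha}\) as you assert); pairing \(\Ric_{g_i}-(n-1)g_i\) against test fields \(h\in L^{1,p}\) with \(\|h\|_{L^{1,p}}=1\) and using H\"older together with the Sobolev embedding \(L^{1,p}\subset L^{\infty}\) for large \(p\) shows these pairings tend to zero, so \(g_\infty\) is a weak \(L^{1,p}\) solution of \(\Ric=(n-1)g\); in harmonic coordinates this is the elliptic system \(g_\infty^{kj}\partial_k\partial_j g_{\infty,rs}+Q(g_\infty,\partial g_\infty)=(n-1)g_{\infty,rs}\), and elliptic regularity bootstraps \(g_\infty\) to \(L^{2,p/2}\cap C^{1,\alpha}\) and ultimately to a real-analytic, hence smooth, Einstein metric. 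Your parenthetical ``bootstrap in harmonic coordinates directly'' is the right idea, but it is exactly what has to be proved, and you give no argument for why the almost-Einstein condition survives the passage to the weak limit.

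Your preferred remedy, a short-time Ricci--DeTurck smoothing of each \(g_i\), is not supported by the hypotheses: with only \(\Ric\geq n-1\) and \(\inj\geq i_0\), and no two-sided curvature bound, there is no standard short-time existence result with Shi-type derivative estimates that yields metrics with controlled covariant derivatives of curvature while remaining \(C^{0,\alpha}\)-close to \(g_i\). Moreover, even granting such a smoothing, the \(L^1\)-almost-Einstein estimate was extracted from Green's inequality applied to the original metrics via their conjugate radii, and you would need to re-derive or transport it to the smoothed metrics; the paper's own remarks emphasize that flow-smoothing loses control of the conjugate radius. So the smoothing route is at best unproven, and the harmonic-coordinate weak-solution argument of Anderson is the one that actually completes the proof.
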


\

\begin{proof}
  Let \((M_i,g_i)\) be a sequence of closed Riemannian $n$-manifolds with \(\Ric_{M_i}\geq n-1\), \(a_i=\conj M_i\) and \(\lim_{i\to \infty} a_i=\pi\).
  
  As in the proof of the previous theorem, one sees that there is an \(i_0>0\) such that \(\inj M_i>i_0\).
  Therefore, by Theorem~\ref{sec:preliminaries-2}, we may assume that the \(M_i\) 
	converge in \(C^{0,\alpha}\) topology to a manifold \((M_\infty,g_\infty)\).
  In particular, we may assume that \(M_\infty\) is diffeomorphic to \(M_i\) for sufficiently large \(i\) and that the metrics \(g_i\) converge in \(C^{0,\alpha}\) topology to the metric \(g_\infty\).

  By Green's theorem, we now have
  \begin{equation*}
    n(n-1)\leq \frac{1}{\vol M_i}\int_{M_i}\scal_{M_i}\;\dvol_{M_i}\leq \frac{n(n-1)\pi^2}{a_i^2} \rightarrow n(n-1) \text{ as } i \to \infty.
  \end{equation*}
Hence for a vector field \(X\)  on \(M_\infty\) with \(\|X\|_\infty \leq 1\) and \(i\) sufficient large, we have
\begin{align*}
 & \frac{1}{\vol M_i}\int_{M_i} |\Ric_{M_i}(X,X)-(n-1)\|X\|_i^2| \;\dvol_{M_i}\\
&= \frac{1}{\vol M_i}\int_{M_i} \Ric_{M_i}(X,X)-(n-1)\|X\|_i^2 \;\dvol_{M_i}\\
&\leq \frac{1}{\vol M_i}\int_{M_i} (\scal_{M_i}-n(n-1))\|X\|_i^2 \;\dvol_{M_i}\\
&\leq \frac{2}{\vol M_i}\int_{M_i} \scal_{M_i}-n(n-1) \;\dvol_{M_i} \rightarrow 0 \text{ as } i \to \infty.
\end{align*}

Here the first inequality follows because, for \(x\in M\) and all \(Y\in T_xM_i\), 
$$\Ric_{M_i}(Y,Y)-(n-1)\|Y\|_i^2\geq 0.$$
 Moreover we have \(\scal_{M_i}(x)=\sum_{j=1}^n \Ric_{M_i}(Y_j,Y_j)\), where \(Y_1,\dots,Y_n\) is a orthonormal basis of \(T_xM_i\) with \(X(x)=\|X(x)\|_iY_1\).
Therefore it follows that
\begin{align*}
  \scal_{M_i}(x)\|X(x)\|_i-n(n-1)\|X(x)\|_i&=\sum_{j=1}^n (\Ric_{M_i}(Y_j,Y_j)\|X(x)\|_i^2-(n-1)\|X(x)\|_i^2)\\ & \geq \Ric_{M_i}(X(x),X(x))-(n-1)\|X(x)\|_i^2.
\end{align*}

Hence, we have \(\lim_{i\to\infty} \|\Ric_{M_i} - (n-1)g_i\|_{L^1}=0\).
Now, as in \cite[Proof of Proposition 3.29, Step 3]{MR1240887},
it follows from elliptic regularity theory that \(g_\infty\) is a smooth Einstein metric with Einstein constant \(n-1\). 

For the convenience of the reader, we give an outline of this argument.
Let \(h\in L^{1,p}\) for \(p\) large with \(\|h\|_{L^{1,p}}=1\).
By the H\"older inequality and the Sobolev embedding theorem, we have
\begin{equation*}
  \begin{split}
  \int_{M_\infty}\langle
  \Ric_{M_i}-(n-1)g_i,h\rangle\;\dvol_{M_\infty}\leq
  \|\Ric_{M_i}-(n-1)g_i\|_{L^1}\|h\|_{L^{\infty}}\\ \leq
  K(p,n)\|\Ric_{M_i}-(n-1)g_i\|_{L^1}\|h\|_{L^{1,p}}\rightarrow 0    
  \end{split}
\end{equation*}
as \(i\to \infty\).
Therefore it follows that \(g_\infty\) is a weak \(L^{1,p}\) solution
of the Einstein equation.

In harmonic coordinates, the Einstein equation is given by the elliptic
system

\begin{equation*}
  \label{eq:2}
  g_\infty^{kj}\frac{\partial^2 g_{\infty,rs}}{\partial x_k\partial
    x_j}+Q(g_\infty,\partial g_\infty)=(\Ric_{M_\infty})_{rs}=(n-1)g_{\infty,rs},
\end{equation*}

where \(Q\) is a quadratic term in \(g_\infty\) and the first
derivatives of \(g_\infty\).

Since \(g_\infty\in C^{0,\alpha}\cap L^{1,p}\) for large \(p\), we have locally
uniform \(C^{0,\alpha}\) bounds  on \(g_\infty^{kj}\) and \(L^{p/2}\)
bounds on \(Q\) and \(g_{\infty,rs}\).

Elliptic regularity gives uniform bounds on
\(\|g_{\infty}\|_{L^{2,p/2}}\), for large \(p\).
Therefore \(g_\infty\) is contained in \(L^{2,p/2}\cap C^{1,\alpha}\).
By continuing in this process, we get from elliptic regularity that
\(g_\infty\) is real analytic in harmonic coordinates.

Hence, \(g_\infty\) is a smooth Einstein metric.
\end{proof}

\

From the proof of the above result one also obtains what is Corollary~\ref{sec:introduction-2}
in the introduction:

\

\begin{cor}
\label{sec:test4}
 For all $n \geq 4$ and $k >0$ there exists \(\epsilon=\epsilon(n,k)>0\) such that every closed \(n\)-dimensional  Riemannian manifold \(M\) with
  \begin{align*}
    |\pi_1(M)|&\leq k\\
    \sec_M&\geq 1-\frac{3}{n+2}\\
    \Ric_M&\geq n-1\\
    \conj M& \geq \pi-\epsilon
  \end{align*}
is  diffeomorphic to a spherical space form or a locally symmetric manifold finitely covered by \(\C P^m\), \(n=2m\).

If \(n=4\), we can replace the lower bound on the sectional curvature by the bound \(\sec_M\geq \frac{2-\sqrt{2}}{2}\) to get the same conclusion.
\end{cor}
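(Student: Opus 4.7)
The plan is to argue by contradiction, as in the proof of Theorem~\ref{sec:test5}. Suppose the corollary fails for some $n\geq 4$ and $k>0$. Then there is a sequence of closed Riemannian $n$-manifolds $(M_i,g_i)$ satisfying all four hypotheses with $\conj M_i\to \pi$, such that no $M_i$ is diffeomorphic to a spherical space form or to a quotient of $\C P^m$. The proof of Theorem~\ref{sec:test5} already provides, after passing to a subsequence, a $C^{0,\alpha}$-limit $(M_\infty,g_\infty)$ which is a smooth closed Einstein manifold with Einstein constant $n-1$, and for which $M_i$ is diffeomorphic to $M_\infty$ for all sufficiently large $i$.

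Next I would transfer the lower sectional curvature bound to the limit. Since $C^{0,\alpha}$ convergence implies Gromov--Hausdorff convergence of the underlying distance spaces, and lower sectional curvature bounds in the sense of Alexandrov are preserved under Gromov--Hausdorff limits, $(M_\infty,g_\infty)$ is an Alexandrov space with curvature $\geq 1-\tfrac{3}{n+2}$. Smoothness of $g_\infty$ upgrades this to the Riemannian bound $\sec_{g_\infty}\geq 1-\tfrac{3}{n+2}$. Combined with $\Ric_{g_\infty}=(n-1)g_\infty$, the identity expressing a Ricci coefficient as a sum of $n-1$ sectional curvatures in an orthonormal frame yields the complementary upper bound $\sec_{g_\infty}\leq \tfrac{4(n-1)}{n+2}$, so that $(M_\infty,g_\infty)$ is a weakly $\tfrac14$-pinched compact Einstein manifold.

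For $n\geq 5$ I would then invoke the classical rigidity theorem for weakly $\tfrac14$-pinched compact Einstein manifolds (which goes back to Berger): passing if necessary to the universal cover, such a manifold is isometric, up to scaling, either to the round sphere or to one of the compact rank-one symmetric spaces $\C P^m$, $\mathbb{H}P^m$, $\mathbb{O}P^2$. A direct comparison of the canonical Einstein constants of $\mathbb{H}P^m$ and $\mathbb{O}P^2$ with their minimal sectional curvatures shows that, once normalized to Einstein constant $n-1$, the minimal sectional curvature of these two families is strictly below $\tfrac{n-1}{n+2}$; hence these spaces are excluded. Only spherical space forms and $\C P^m$-quotients (with $n=2m$) remain, contradicting the choice of $(M_i,g_i)$.

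The case $n=4$ would be handled analogously, replacing the bound $1-\tfrac{3}{n+2}=\tfrac12$ by the weaker $\tfrac{2-\sqrt{2}}{2}$ and appealing to a dimension-four rigidity theorem for Einstein metrics which exploits the self-dual/anti-self-dual decomposition of the Weyl tensor to show that an Einstein $4$-manifold with the appropriately normalized bound $\sec\geq \tfrac{2-\sqrt 2}{2}$ is locally symmetric, hence covered by $S^4$ or $\C P^2$. The main obstacle in this plan is to pin down the exact references for these Einstein rigidity theorems and to verify carefully that the normalizations align so that $\mathbb{H}P^m$ and $\mathbb{O}P^2$ are indeed excluded; the Alexandrov-to-Riemannian step and the appeal to Theorem~\ref{sec:test5} are comparatively routine.
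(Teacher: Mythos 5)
Your proposal is correct in substance and follows the same skeleton as the paper: argue by contradiction, reuse the compactness argument from the proof of Theorem~\ref{sec:test5} to obtain a smooth Einstein limit \((M_\infty,g_\infty)\) with constant \(n-1\) and \(M_i\) diffeomorphic to \(M_\infty\) for large \(i\), pass the lower sectional curvature bound to the limit, and then invoke a rigidity theorem for such Einstein manifolds. The differences are in the last two steps. Where the paper simply asserts \(\sec_{M_\infty}\geq 1-\tfrac{3}{n+2}\), you justify it via Alexandrov comparison under Gromov--Hausdorff convergence, which is a legitimate (and welcome) argument given that \(g_\infty\) is already known to be smooth. More significantly, where the paper quotes a packaged classification (Theorem 1.1(ii) of \cite{MR3157995}, and for the improved bound in dimension four de Araujo Costa \cite[Theorem 1.2(b)]{MR2078673} --- the latter is exactly the four-dimensional rigidity theorem you describe), you essentially re-derive the \(n\geq 5\) case: your pinching computation is correct (\(\Ric=(n-1)g\) together with \(\sec\geq\tfrac{n-1}{n+2}\) forces \(\sec\leq\tfrac{4(n-1)}{n+2}\), i.e.\ global weak \(\tfrac14\)-pinching), and your normalization argument does exclude \(\mathbb{H}P^m\) and the Cayley plane, since with Einstein constant \(n-1\) their minimal sectional curvatures are \(\tfrac{n-1}{n+8}\) and \(\tfrac{15}{36}\), both strictly below \(\tfrac{n-1}{n+2}\). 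The one point to repair is the attribution: the ``classical'' theorem of Berger applies only in even dimensions, so for odd \(n\geq 5\) (and to cover the isometric-versus-diffeomorphic dichotomy cleanly) you need the modern results --- Brendle--Schoen's classification of weakly pointwise \(\tfrac14\)-pinched manifolds, or Brendle's theorem that compact Einstein manifolds with nonnegative isotropic curvature are locally symmetric (the reference \cite{MR2573825} already quoted in the introduction is in this circle); with either of these in place of Berger, your route gives a self-contained substitute for the citation of \cite{MR3157995}, at the cost of redoing work that the cited theorem already packages.
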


\

\begin{proof}
  Consider, as in the proof of Theorem~\ref{sec:introduction-3}, a sequence \((M_i,g_i)\) of $n$-manifolds satisfying the above bounds.
  Then a subsequence will converge in \(C^{0,\alpha}\) topology to an Einstein manifold \((M_\infty,g_\infty)\)  
	with Einstein constant \(n-1\).

  Because \(\sec_{M_i}\geq 1-\frac{3}{n+2}\), we have \(\sec_{M_\infty}\geq 1-\frac{3}{n+2}\).
  Therefore it follows from Theorem 1.1 (ii) of \cite{MR3157995} that \(M_\infty\) is isometric to a locally symmetric manifold finitely covered by \(S^n\) or \(\C P^m\).

  If \(M\) has dimension four, we get the same conclusion with the improved lower bound on the sectional curvature from results of de Araujo Costa \cite[Theorem 1.2 (b)]{MR2078673}.

  Hence the corollary follows.
\end{proof}

\

\begin{remark}
  If we assume, in the situation of Corollary \ref{sec:test4}, that \(sec_M\geq 1-\frac{3}{n+2}+\delta\), with \(\delta>0\), then by Corollary 3.1 of \cite{MR3157995} \(M\) will be diffeomorphic to a spherical space form.
  Of course, in this case \(\epsilon\) will also depend on \(\delta\).

  In dimension three a statement similar to Corollary \ref{sec:test4} follows from work of Hamilton \cite{MR664497}.

  It is asserted in \cite{cao14:_einst} that a four-dimensional Einstein-manifold with Einstein constant \(3\) and sectional curvature bounded from below by \(\frac{1}{4}\) is isometric to a locally symmetric manifold.
Given this, one can further improve the lower bound on the sectional curvature in the four-dimensional case of our corollary.
\end{remark}

\

Theorems \ref{sec:test3} and~\ref{sec:test5} also suggest the following conjecture:

\

\begin{conjec}
  Under the assumptions of Theorem~\ref{sec:test5}, \(M\) is actually  diffeo\-morphic to a spherical space form.
\end{conjec}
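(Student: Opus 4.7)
Argue by contradiction: suppose the conclusion fails, so that for some fixed $n$ and $k$ there is a sequence $(M_i,g_i)$ of closed $n$-manifolds with $|\pi_1(M_i)|\leq k$, $\Ric_{M_i}\geq n-1$ and $\conj M_i\to\pi$, no $M_i$ being diffeomorphic to a spherical space form. Exactly as in the proof of Theorem~\ref{sec:test5}, after passing to a subsequence the $(M_i,g_i)$ converge in the $C^{0,\alpha}$-topology to a smooth Einstein manifold $(M_\infty,g_\infty)$ with $\Ric_{g_\infty}=(n-1)g_\infty$; Myers' theorem supplies the uniform diameter bound needed for Theorem~\ref{sec:preliminaries-2}, so $M_i$ is diffeomorphic to $M_\infty$ for large $i$ and in particular $(M_\infty,g_\infty)$ is not a spherical space form. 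I then plan to reach a contradiction by applying Theorem~\ref{sec:test3} to $(M_\infty,g_\infty)$. Since $|\pi_1(M_\infty)|=|\pi_1(M_i)|\leq k$ automatically, it is enough to show $\conj M_\infty\geq \pi-\epsilon(n,k)$; and because $\scal_{g_\infty}=n(n-1)$, Green's theorem (Theorem~\ref{sec:proof-main-results}) already forces $\conj M_\infty\leq \pi$, so it is actually sufficient to prove the equality $\conj M_\infty=\pi$, whereupon the rigidity clause of Green's theorem directly gives $(M_\infty,g_\infty)$ constant curvature one, contradicting the hypothesis.

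The crux is therefore to pass the condition $\conj M_i\to\pi$ through to the limit. The $C^{0,\alpha}$-convergence produced by Theorem~\ref{sec:preliminaries-2} is far too weak for this, since conjugate points are governed by second derivatives of the metric via the Jacobi equation. My proposed route is to smooth each $g_i$ by short-time Ricci flow, producing $\tilde g_i:=g_i(\tau_i)$ for a sequence $\tau_i\to 0$ chosen carefully relative to the modulus of $C^{0,\alpha}$-convergence of the original sequence. By Shi's derivative estimates the metrics $\tilde g_i$ enjoy uniform bounds on every $\|\nabla^\ell \Ric\|$, so Theorem~\ref{sec:preliminaries-1} yields a $C^{2,\alpha}$-subsequential limit, which---because $\tau_i\to 0$ and $g_i\to g_\infty$ already in $C^{0,\alpha}$---should coincide with $g_\infty$. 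Under $C^{2,\alpha}$-convergence Jacobi fields pass to the limit and the conjugate radius is lower semicontinuous, giving $\conj g_\infty\geq\liminf_i\conj\tilde g_i$, which combined with Green's upper bound closes the loop.

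The principal obstacle is the quantitative stability estimate one would still need to finish, namely $\conj\tilde g_i\to\pi$ as $\tau_i\to 0$. No such estimate appears to be available: Ricci flow is known to behave subtly with respect to the conjugate radius even for metrics with strong curvature bounds, and here only a lower Ricci bound is available at $\tau=0$, so one cannot even a priori bound the Ricci evolution term in a way that protects Jacobi fields. This is precisely the obstruction flagged in the introductory remarks explaining why the bounds on $\|\nabla\Ric\|$ and on $\vol$ cannot currently be dropped from Theorem~\ref{sec:introduction}: there is no working convergence theory for the conjugate radius at the regularity available to us. A conceivable alternative bypasses Ricci flow and instead tries to upgrade the $C^{0,\alpha}$-convergence $g_i\to g_\infty$ directly, using that $g_\infty$ is smooth Einstein and that $\Ric_{g_i}-(n-1)g_i\to 0$ in $L^1$ (as established in the proof of Theorem~\ref{sec:test5}); however, this approximate Einstein condition is too weak to invoke standard elliptic regularity for the $g_i$, so resolving the conjecture along either route would require genuinely new input.
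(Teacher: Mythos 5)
The statement you are asked to prove is left as a \emph{conjecture} in the paper: the authors do not prove it, and they explicitly record the obstruction, namely that the compactness available under the hypotheses (Theorem~\ref{sec:preliminaries-2}) only gives \(C^{0,\alpha}\)-convergence, in which topology the conjugate radius has no known semicontinuity. Your proposal does not close this gap either, and to your credit you say so: the entire argument hinges on transporting the condition \(\conj M_i\to\pi\) to the limit metric \(g_\infty\) (so as to invoke the rigidity clause of Green's theorem, or Theorem~\ref{sec:test3}), and neither of your two routes supplies that step. The Ricci-flow smoothing route founders exactly where the paper's own remark says it must: Shi's estimates do give the bounds on \(\|\nabla^\ell\Ric\|\) needed for Theorem~\ref{sec:preliminaries-1}, but there is no estimate asserting \(\conj \tilde g_i\to\pi\) as \(\tau_i\to 0\); all one knows is some lower bound on the conjugate radius after flowing, which is useless here since the argument needs the conjugate radii to converge to \(\pi\). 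The alternative route (upgrading the regularity of the \(g_i\) themselves from the approximate Einstein condition \(\|\Ric_{g_i}-(n-1)g_i\|_{L^1}\to 0\)) also fails, since an \(L^1\)-smallness condition on the Ricci tensor gives no elliptic control on the individual metrics \(g_i\) and hence no \(C^2\)-type convergence along which Jacobi fields, and thus conjugate points, behave continuously.

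So the proposal is a correct diagnosis of the difficulty rather than a proof: the missing ingredient is a genuine stability or semicontinuity statement for the conjugate radius under the weak convergence forced by a mere lower Ricci bound, and that is precisely the open point which makes the statement a conjecture in the paper. Two smaller cautions: lower semicontinuity of \(\conj\) under \(C^{2,\alpha}\)-convergence of the smoothed metrics would only relate \(\conj g_\infty\) to \(\liminf_i \conj\tilde g_i\), not to \(\liminf_i\conj g_i\), so even granting that step you would still need the unproved comparison between \(\conj\tilde g_i\) and \(\conj g_i\); and one must also check that the Ricci-flow limits coincide with \(g_\infty\) and that the flow preserves enough of the hypotheses (e.g.\ a lower Ricci bound up to small error) for the limiting argument to apply --- none of which is the essential obstruction, but all of which would need proof in a complete argument.
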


The problem which occurs if one wants to prove this conjecture is the following: 
In the proof of Theorem~\ref{sec:test5} one only has convergence in \(C^{0,\alpha}\) topology.
The conjugate radius is not lower semi-continuous in this topology.
 If it were upper semi-continuous in this topology, then the conjecture would follow from Theorem~\ref{sec:test5} and Green's Theorem.

\bibliography{sphere}{}

\providecommand{\bysame}{\leavevmode\hbox to3em{\hrulefill}\thinspace}
\providecommand{\MR}{\relax\ifhmode\unskip\space\fi MR }
% \MRhref is called by the amsart/book/proc definition of \MR.
\providecommand{\MRhref}[2]{%
  \href{http://www.ams.org/mathscinet-getitem?mr=#1}{#2}
}
\providecommand{\href}[2]{#2}
\begin{thebibliography}{10}

\bibitem{MR0219077}
D.~W. Anderson, E.~H. Brown, Jr., and F.~P. Peterson, \emph{The structure of
  the {S}pin cobordism ring}, Ann. of Math. (2) \textbf{86} (1967), 271--298.
  \MR{0219077 (36 \#2160)}

\bibitem{MR1074481}
Michael~T. Anderson, \emph{Convergence and rigidity of manifolds under {R}icci
  curvature bounds}, Invent. Math. \textbf{102} (1990), no.~2, 429--445.
  \MR{1074481 (92c:53024)}

\bibitem{MR1158336}
Michael~T. Anderson and Jeff Cheeger, \emph{{$C^\alpha$}-compactness for
  manifolds with {R}icci curvature and injectivity radius bounded below}, J.
  Differential Geom. \textbf{35} (1992), no.~2, 265--281. \MR{1158336
  (93c:53028)}

\bibitem{MR0140054}
M.~Berger, \emph{Les vari\'et\'es {R}iemanniennes {$(1/4)$}-pinc\'ees}, Ann.
  Scuola Norm. Sup. Pisa (3) \textbf{14} (1960), 161--170. \MR{0140054 (25
  \#3478)}

\bibitem{MR0130659}
\bysame, \emph{Sur quelques vari\'et\'es d'{E}instein compactes}, Ann. Mat.
  Pura Appl. (4) \textbf{53} (1961), 89--95. \MR{0130659 (24 \#A519)}

\bibitem{MR0238226}
\bysame, \emph{Sur les vari\'et\'es d'{E}instein compactes}, Comptes {R}endus
  de la {III}e {R}\'eunion du {G}roupement des {M}ath\'ematiciens
  d'{E}xpression {L}atine ({N}amur, 1965), Librairie Universitaire, Louvain,
  1966, pp.~35--55. \MR{0238226 (38 \#6502)}

\bibitem{MR597027}
Marcel Berger, \emph{Une borne inf\'erieure pour le volume d'une vari\'et\'e
  riemannienne en fonction du rayon d'injectivit\'e}, Ann. Inst. Fourier
  (Grenoble) \textbf{30} (1980), no.~3, 259--265. \MR{597027 (82b:53047)}

\bibitem{MR1240887}
Greg{\'o}rio~Pacelli Bessa, \emph{Differentiable sphere theorems for {R}icci
  curvature}, Math. Z. \textbf{214} (1993), no.~2, 245--259. \MR{1240887
  (94g:53035)}

\bibitem{MR496885}
Arthur~L. Besse, \emph{Manifolds all of whose geodesics are closed}, Ergebnisse
  der Mathematik und ihrer Grenzgebiete [Results in Mathematics and Related
  Areas], vol.~93, Springer-Verlag, Berlin-New York, 1978, With appendices by
  D. B. A. Epstein, J.-P. Bourguignon, L. B{\'e}rard-Bergery, M. Berger and J.
  L. Kazdan. \MR{496885 (80c:53044)}

\bibitem{MR867684}
\bysame, \emph{Einstein manifolds}, Ergebnisse der Mathematik und ihrer
  Grenzgebiete (3) [Results in Mathematics and Related Areas (3)], vol.~10,
  Springer-Verlag, Berlin, 1987. \MR{867684 (88f:53087)}

\bibitem{MR2178969}
Charles~P. Boyer, Krzysztof Galicki, and J{\'a}nos Koll{\'a}r, \emph{Einstein
  metrics on spheres}, Ann. of Math. (2) \textbf{162} (2005), no.~1, 557--580.
  \MR{2178969 (2006j:53058)}

\bibitem{MR2146519}
Charles~P. Boyer, Krzysztof Galicki, J{\'a}nos Koll{\'a}r, and Evan Thomas,
  \emph{Einstein metrics on exotic spheres in dimensions 7, 11, and 15},
  Experiment. Math. \textbf{14} (2005), no.~1, 59--64. \MR{2146519
  (2006a:53042)}

\bibitem{MR2573825}
S.~Brendle, \emph{Einstein manifolds with nonnegative isotropic curvature are
  locally symmetric}, Duke Math. J. \textbf{151} (2010), no.~1, 1--21.
  \MR{2573825 (2010m:53062)}

\bibitem{MR2449060}
Simon Brendle and Richard Schoen, \emph{Manifolds with {$1/4$}-pinched
  curvature are space forms}, J. Amer. Math. Soc. \textbf{22} (2009), no.~1,
  287--307. \MR{2449060 (2010a:53045)}

\bibitem{cao14:_einst}
Xiaodong Cao and Peng Wu, \emph{{E}instein four-manifolds of three-nonnegative
  curvature operator}, Preprint, 2014.

\bibitem{MR1484888}
Jeff Cheeger and Tobias~H. Colding, \emph{On the structure of spaces with
  {R}icci curvature bounded below. {I}}, J. Differential Geom. \textbf{46}
  (1997), no.~3, 406--480. \MR{1484888 (98k:53044)}

\bibitem{MR0378001}
Shiu~Yuen Cheng, \emph{Eigenvalue comparison theorems and its geometric
  applications}, Math. Z. \textbf{143} (1975), no.~3, 289--297. \MR{0378001 (51
  \#14170)}

\bibitem{MR1369415}
Tobias~H. Colding, \emph{Large manifolds with positive {R}icci curvature},
  Invent. Math. \textbf{124} (1996), no.~1-3, 193--214. \MR{1369415
  (96k:53068)}

\bibitem{MR608287}
Christopher~B. Croke, \emph{Some isoperimetric inequalities and eigenvalue
  estimates}, Ann. Sci. \'Ecole Norm. Sup. (4) \textbf{13} (1980), no.~4,
  419--435. \MR{608287 (83d:58068)}

\bibitem{MR2078673}
{\'E}zio de~Araujo~Costa, \emph{On {E}instein four-manifolds}, J. Geom. Phys.
  \textbf{51} (2004), no.~2, 244--255. \MR{2078673 (2005f:53059)}

\bibitem{MR0417977}
Paul~E. Ehrlich, \emph{Continuity properties of the injectivity radius
  function}, Compositio Math. \textbf{29} (1974), 151--178. \MR{0417977 (54
  \#6022)}

\bibitem{MR1100210}
J.-H. Eschenburg, \emph{Diameter, volume, and topology for positive {R}icci
  curvature}, J. Differential Geom. \textbf{33} (1991), no.~3, 743--747.
  \MR{1100210 (92b:53054)}

\bibitem{MR0155271}
L.~W. Green, \emph{Auf {W}iedersehensfl\"achen}, Ann. of Math. (2) \textbf{78}
  (1963), 289--299. \MR{0155271 (27 \#5206)}

\bibitem{MR577131}
Mikhael Gromov and H.~Blaine Lawson, Jr., \emph{The classification of simply
  connected manifolds of positive scalar curvature}, Ann. of Math. (2)
  \textbf{111} (1980), no.~3, 423--434. \MR{577131 (81h:53036)}

\bibitem{MR3201312}
Misha Gromov, \emph{Dirac and {P}lateau billiards in domains with corners},
  Cent. Eur. J. Math. \textbf{12} (2014), no.~8, 1109--1156. \MR{3201312}

\bibitem{MR1049696}
Karsten Grove and Peter Petersen, V, \emph{A pinching theorem for homotopy
  spheres}, J. Amer. Math. Soc. \textbf{3} (1990), no.~3, 671--677. \MR{1049696
  (91e:53040)}

\bibitem{MR0500705}
Karsten Grove and Katsuhiro Shiohama, \emph{A generalized sphere theorem}, Ann.
  of Math. (2) \textbf{106} (1977), no.~2, 201--211. \MR{0500705 (58 \#18268)}

\bibitem{MR664497}
Richard~S. Hamilton, \emph{Three-manifolds with positive {R}icci curvature}, J.
  Differential Geom. \textbf{17} (1982), no.~2, 255--306. \MR{664497
  (84a:53050)}

\bibitem{MR697986}
Yoe Itokawa, \emph{The topology of certain {R}iemannian manifolds with positive
  {R}icci curvature}, J. Differential Geom. \textbf{18} (1983), no.~1,
  151--155. \MR{697986 (84i:53044)}

\bibitem{MR2434347}
M.~Joachim and D.~J. Wraith, \emph{Exotic spheres and curvature}, Bull. Amer.
  Math. Soc. (N.S.) \textbf{45} (2008), no.~4, 595--616. \MR{2434347
  (2009f:57053)}

\bibitem{MR0139120}
Wilhelm Klingenberg, \emph{\"{U}ber {R}iemannsche {M}annigfaltigkeiten mit
  positiver {K}r\"ummung}, Comment. Math. Helv. \textbf{35} (1961), 47--54.
  \MR{0139120 (25 \#2559)}

\bibitem{MR0180978}
John~W. Milnor, \emph{Remarks concerning spin manifolds}, Differential and
  {C}ombinatorial {T}opology ({A} {S}ymposium in {H}onor of {M}arston {M}orse),
  Princeton Univ. Press, Princeton, N.J., 1965, pp.~55--62. \MR{0180978 (31
  \#5208)}

\bibitem{MR1692012}
Seong-Hun Paeng and Jong-Gug Yun, \emph{Conjugate radius and sphere theorem},
  Canad. Math. Bull. \textbf{42} (1999), no.~2, 214--220. \MR{1692012
  (2000e:53036)}

\bibitem{MR1326988}
G.~Perelman, \emph{A diameter sphere theorem for manifolds of positive {R}icci
  curvature}, Math. Z. \textbf{218} (1995), no.~4, 595--596. \MR{1326988
  (96f:53056)}

\bibitem{MR2243772}
Peter Petersen, \emph{Riemannian geometry}, second ed., Graduate Texts in
  Mathematics, vol. 171, Springer, New York, 2006. \MR{2243772 (2007a:53001)}

\bibitem{MR1209707}
Peter Petersen and Shun-Hui Zhu, \emph{An excess sphere theorem}, Ann. Sci.
  \'Ecole Norm. Sup. (4) \textbf{26} (1993), no.~2, 175--188. \MR{1209707
  (94b:53077)}

\bibitem{MR3096308}
Benjamin Schmidt, \emph{Positively curved manifolds with large conjugate
  radius}, J. Topol. Anal. \textbf{5} (2013), no.~3, 333--344. \MR{3096308}

\bibitem{MR535700}
R.~Schoen and S.~T. Yau, \emph{On the structure of manifolds with positive
  scalar curvature}, Manuscripta Math. \textbf{28} (1979), no.~1-3, 159--183.
  \MR{535700}

\bibitem{MR682734}
Katsuhiro Shiohama, \emph{A sphere theorem for manifolds of positive {R}icci
  curvature}, Trans. Amer. Math. Soc. \textbf{275} (1983), no.~2, 811--819.
  \MR{682734 (84c:53041)}

\bibitem{MR1189863}
Stephan Stolz, \emph{Simply connected manifolds of positive scalar curvature},
  Ann. of Math. (2) \textbf{136} (1992), no.~3, 511--540. \MR{1189863
  (93i:57033)}

\bibitem{MR0103510}
V.~A. Toponogov, \emph{Riemann spaces with curvature bounded below}, Uspehi
  Mat. Nauk \textbf{14} (1959), no.~1 (85), 87--130. \MR{0103510 (21 \#2278)}

\bibitem{MR1855649}
Wilderich Tuschmann, \emph{Smooth diameter and eigenvalue rigidity in positive
  {R}icci curvature}, Proc. Amer. Math. Soc. \textbf{130} (2002), no.~1,
  303--306 (electronic). \MR{1855649 (2002e:53059)}

\bibitem{MR3157995}
Hong-wei Xu and Juan-ru Gu, \emph{Rigidity of {E}instein manifolds with
  positive scalar curvature}, Math. Ann. \textbf{358} (2014), no.~1-2,
  169--193. \MR{3157995}

\bibitem{MR1001711}
Takao Yamaguchi, \emph{Lipschitz convergence of manifolds of positive {R}icci
  curvature with large volume}, Math. Ann. \textbf{284} (1989), no.~3,
  423--436. \MR{1001711 (90c:53114)}

\end{thebibliography}
\bibliographystyle{amsplain}
\end{document}